\documentclass[a4paper,11pt]{amsart}

\usepackage{amsmath,amssymb,amsthm}
\usepackage{mathtools}
\usepackage{amsfonts}

\usepackage{graphicx}
\usepackage{subcaption}   

\usepackage{tikz}
\usetikzlibrary{intersections,calc,arrows.meta}

\usepackage[
  top=30mm,
  bottom=30mm,
  left=25mm,
  right=25mm
]{geometry}

\theoremstyle{definition}

\theoremstyle{definition}
\newtheorem{dfn}{Definition}[section]

\newtheorem{rem}[dfn]{Remark}

\theoremstyle{plain}
\newtheorem{lem}[dfn]{Lemma}

\newtheorem{theo}[dfn]{Theorem}
\newtheorem{cor}[dfn]{Corollary}

\newtheorem*{theono}{Theorem} 

\newcommand{\Var}{\text{Var}}

\newcommand{\Nt}{\{N_t\}_{t \ge 0}}
\newcommand{\Dt}{\{D_t\}_{t \ge 0}}

\allowdisplaybreaks[4]

\title[]{ Laplace Transforms of Stopping Times 
for Subordinator with Applications to Inventory Control 
}

\author{
 Ryoya Koide 
 }

 \address{
Department of Mathematics, Faculty of Science and Technology, Tokyo University of Science, 2641 Yamazaki, Noda-shi,
Chiba prefecture, 278-8510, Japan.
}
\email{
koideryoya@gmail.com
}

\begin{document}
\maketitle
\begin{abstract}
\noindent Intermittent demand fluctuations pose significant challenges in disaster logistics and medical supply systems. In this study, we formulate cumulative demand as a generalized Lévy process composed of a drift term, Poisson jumps, and compound Poisson jumps, and analyze a continuous-time inventory model. The proposed framework provides a unified formulation that encompasses both drifted Poisson processes and drifted compound Poisson processes.
\noindent From a mathematical perspective, we treat the reorder time as a first-passage problem of a subordinator and derive its Laplace transform via the Laplace exponent. In particular, for the drifted Poisson case, we obtain an explicit representation of the inverse Laplace exponent using the Lambert W function, which yields an analytic expression for the Laplace transform of the first-passage time. Furthermore, when the jump sizes follow exponential and Gamma distributions, we derive explicit formulas for the mean and variance of the reorder times, thereby clarifying the moment structure of first-passage times for generalized Lévy demand processes.
\noindent From an operations research perspective, we explicitly characterize the expected total cost over a finite time horizon based on the distribution of cumulative demand. This study presents an analytical framework that integrates first-passage theory of Lévy processes with continuous-time inventory control.
\end{abstract}
\section{Introduction}

\noindent Inventory control problems under uncertain demand constitute an important research area in both probability theory and operations research. In particular, in situations such as disaster response or intermittent demand, continuous demand models are often insufficient. Lévy processes provide a well-behaved class of stochastic processes capable of modeling discontinuous changes. (~\cite{WANG20241038},~\cite{KOURENTZES2013198},~\cite{yuna2023inventory},~\cite{koide2025inventorycontrolusinglevy},~\cite{Koide2025DriftedPoisson}) 

\noindent For example, Noba and Yamazaki (2025)~\cite{noba2025stochastic} propose an inventory control policy based on Lévy processes and conduct analytical investigations. In their framework, the total cost incurred in inventory control is expressed as a general functional form, and the optimal replenishment strategy is characterized. However, difficulties remain in performing numerical computations and explicit calculations under concrete specifications.

\noindent On the other hand, Koide et al. (2024)~\cite{Koide2024} and Koide et al. (2025)~\cite{koide2025inventorycontrolusinglevy} propose inventory control models driven by Lévy processes. By imposing engineering-oriented assumptions that facilitate computation and simulation, they explicitly derive the expected total cost and enable numerical simulations.

\noindent In this study, we consider $\{D_t\}_{t \ge 0}$ as a stochastic process representing cumulative demand. In particular, we focus on non-decreasing Lévy processes (subordinators), such as a drifted Poisson process (cf. the setting in Koide~\cite{koide2025inventorycontrolusinglevy}). Let $a>0$, $Q>0$, and $n \in \mathbb{N}$. The first passage time of $D_t$ is defined by
\begin{equation*}
T_n =  \inf \left\{s > 0 \,\middle|\, D_s \ge a + (n-1)Q \right\}.
\end{equation*}
The random variable $T_n$ represents the time of the $n$-th replenishment. Let $x>0$ denote the initial inventory level, where $x-a$ is the reorder point and $Q>0$ is the order quantity. We consider a fixed-order-quantity policy. Then, the inventory level at time $t$, denoted by $X_t$, is given by
\begin{equation*}
X_t = x - D_t + Q R_t  \quad  \text{, where} \quad 
R_t =  \sum_{n \ge 1}  1_{\{T_n < t \}} 
= \inf\{ k \mid T_k > t\}.
\end{equation*}
Here, $\{R_t\}_{t \ge 0}$ represents the stochastic process describing the cumulative number of orders placed up to time $t$.

\noindent In order to compute the expectation and variance of $X_t$, as well as the expected total cost, it is necessary to evaluate the expectation and variance of $T_n$. However, the distribution of $T_n$ is generally unknown, making direct computation difficult.

\noindent The computational tractability achieved in Koide et al. (2025)~\cite{koide2025inventorycontrolusinglevy} relies on engineering-oriented assumptions motivated by numerical computation and simulation. Moreover, the applicability to real data is discussed in Koide et al. (2025)~\cite{Koide2025DriftedPoisson}.

\noindent In contrast, the present study does not impose such engineering assumptions. Instead, by directly exploiting the probabilistic structure of stopping times for Lévy processes, we demonstrate that the expectation and variance of $T_n$ can be derived explicitly. Furthermore, we analyze the expectation of the inventory level, the expected total cost, and the asymptotic behavior of the total cost.

\section{Setting}

\subsection{Fixed-Order-Quantity Policy under Intermittent Demand}
\quad \\
\noindent Intermittent demand refers to a demand pattern in which demand does not occur continuously, but instead exhibits sudden arrivals after long periods of no demand. 
Such demand is characterized by strong uncertainty in both the timing of demand occurrences and the demand sizes. 
We model the cumulative demand under intermittent demand by a subordinator $\{D_t\}_{t \ge 0}$.
\noindent In this model, the inventory level at time $t$ is given by the initial inventory minus the cumulative demand up to time $t$. 
Whenever the inventory level falls below a prescribed threshold, a replenishment of fixed size $Q$ is placed. 
Since the time at which the inventory reaches the threshold is stochastic, we define the corresponding stopping times.
\noindent To model replenishment at each reorder point, the system is designed so that an order is placed whenever the cumulative demand exceeds a prescribed level. The order quantity is also incorporated explicitly. 
The formulation follows \cite{koide2025inventorycontrolusinglevy}.
\begin{dfn}[\textbf{First Passage Time of $\{D_t\}$}]
Let $a>0$, $Q>0$, and $n \in \mathbb{N}$. 
The first passage time of the cumulative demand process $\{D_t\}$ is defined by
\begin{equation}
T_n^{a,Q} (= T_n )
= \inf \left\{s > 0 \,\middle|\, D_s \ge a + (n-1)Q \right\}.
\end{equation}
\end{dfn}
\noindent The parameter $a$ determines the reorder level, and $Q$ represents the order quantity. 
The random variable $T_n$ denotes the stochastic time of the $n$-th replenishment under the fixed-order-quantity policy. 
Using these quantities, we define the inventory control model under intermittent demand as follows.
\begin{dfn}[\textbf{Inventory Level Process}]
Let $x>0$ denote the initial inventory level. 
We consider a fixed-order-quantity policy with reorder point $x-a$ and order quantity $Q>0$. 
Let $\{D_t\}$ be the subordinator representing cumulative demand. 
The inventory level process $\{X_t\}$ is defined by
\begin{equation}
X_t = x - D_t + Q R_t ,
\end{equation}
where
\begin{equation}
R_t =  \sum_{n \ge 1}  1_{\{T_n^{a,Q} <t \}} 
= \inf\{ k \mid T_k^{a,Q} > t\}
\end{equation}
denotes the stochastic process representing the cumulative number of orders placed up to time $t$.
\end{dfn}
\noindent Taking expectations yields
\begin{align}
E[X_t] 
&= x - E[D_t] + Q E[R_t] \notag\\
&= x - E[D_t] + Q \sum_{n \ge 1} E[1_{\{T_n< t\}}] 
\quad \text{(by Fubini's theorem)} \notag\\
\end{align}
Therefore,
\begin{align}
E[X_t]&= x - E[D_t] + Q \sum_{n \ge 1} P(T_n < t).
\label{inventorylevel1}
\end{align}
\subsection{Expected Total Cost}
\quad \\
\noindent In order to determine an optimal inventory control strategy, it is necessary to evaluate the associated costs explicitly. 
While Noba and Yamazaki \cite{noba2025stochastic} describe the total cost using a general functional form, here we decompose the total cost according to individual cost components.
\begin{dfn}[\textbf{Expected Inventory Cost}]
Let $C_o,\ C_h,\ C_{s.o.}>0$ denote the unit ordering cost, holding cost, and stockout cost, respectively. 
The total cost incurred up to time $t$ is defined by
\begin{equation}
C_{\mathrm{total}}(a,Q,t)
= \underbrace{Q C_o R_t}_{\text{ordering cost}}
  +\underbrace{ C_h \int_0^t X_s ds }_{\text{holding cost}}
  +\underbrace{C_{s.o.}\int_0^t (-X_s )^+ ds}_{\text{stockout cost}}.
\end{equation}
The expectation $E[C_{\mathrm{total}}(a,Q,t)]$ is called the expected total cost.
\end{dfn}
Assuming that the stockout cost is negligible, the expected total cost becomes
\begin{align*}
E[C_{\mathrm{total}}(a,Q,t)] 
&\simeq C_o Q E[R_t] 
+ C_h E\left[\int_0^t(x - D_s + Q R_s) ds\right]   \\
&= C_o Q E[R_t] + C_h x t 
- C_h E \left[\int_0^t D_s ds \right]
+ C_h Q E \left[ \int_0^t R_s ds \right] \\
&= C_o Q E[R_t] + C_h x t 
- C_h \int_0^t E [D_s] ds 
+ C_h Q E \left[ \int_0^t R_s ds \right].
\end{align*}
By Fubini's theorem,
\begin{equation}
E[R_t]
=E\!\left[\sum_{n\ge1}\mathbf 1_{\{T_n<t\}}\right]
=\sum_{n\ge1}P(T_n<t),
\label{eq:ERt_sum}
\end{equation}
and again by Fubini's theorem,
\begin{equation}
E\!\left[\int_0^t R_s\,ds\right]
=\int_0^t E[R_s]\,ds
=\int_0^t \sum_{n\ge1}P(T_n<s)\,ds.
\label{eq:intER}
\end{equation}
Hence,
\begin{align}
E[C_{\mathrm{total}}(a,Q,t)] 
&= C_o Q \sum_{n\ge1}P(T_n<t) 
+ C_h x t 
- C_h \int_0^t E [D_s]ds \notag\\
&\quad + C_h Q \int_0^t \sum_{n\ge1}P(T_n<s)\,ds.
\label{totalcost1}
\end{align}
\begin{rem}
For any $n\ge1$, the identity
\[
\int_0^t \mathbf 1_{\{T_n<s\}}\,ds
=(t-T_n)^+
=t\,\mathbf 1_{\{T_n<t\}}
- T_n\mathbf 1_{\{T_n<t\}}
\]
holds. 
Taking expectations and summing over $n$, and using Fubini's theorem again due to non-negativity, we obtain
\begin{align}
E\!\left[\int_0^t R_s\,ds\right]
&=\sum_{n\ge1}E\!\left[\int_0^t \mathbf 1_{\{T_n<s\}}\,ds\right]
=\sum_{n\ge1}E[(t-T_n)^+] \notag\\
&=\sum_{n\ge1}\Bigl(t\,P(T_n<t)-E[T_n\mathbf 1_{\{T_n<t\}}]\Bigr).
\label{eq:intR_identity}
\end{align}
Therefore, combining \eqref{eq:intR_identity} and \eqref{eq:intER}, we obtain
\begin{equation}
\sum_{n\ge1}\Bigl(t\,P(T_n<t)-E[T_n\mathbf 1_{\{T_n<t\}}]\Bigr)
=\int_0^t \sum_{n\ge1}P(T_n<s)\,ds.
\end{equation}
\end{rem}

\noindent Regarding the Laplace transform of stopping times, the following well-known result holds.

\begin{theono}[cf. J.Bertoin (1996)~\cite{bertoin1996levy}, Sato (1999)~\cite{sato1999levy}]
Let $\{X_t\}$ be a subordinator (a one-dimensional, possibly killed, non-decreasing Lévy process), and define 
\[
\tau_b^+ = \inf\{t>0 \mid X_t > b\}.
\]
Let $\Psi(\theta)= \log E[\exp(\theta X_1)]$ denote the Laplace exponent. Then,
\begin{equation*}
E[\exp(-s \tau_b^+)] = \exp(-b \Phi(s)),
\quad \text{where } \Psi(\Phi(s))= s , \quad (s>0).
\end{equation*}
\end{theono}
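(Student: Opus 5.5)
The plan is the exponential martingale (Wald identity) argument, which is the standard route to first-passage Laplace transforms. For $\theta\ge 0$ with $E[\exp(\theta X_1)]<\infty$, stationarity and independence of increments give $E[\exp(\theta X_t)]=\exp(t\Psi(\theta))$. Hence
\[
M_t^{\theta}=\exp\bigl(\theta X_t-\Psi(\theta)t\bigr)
\]
is a mean-one martingale with respect to the natural filtration of $\{X_t\}$. In the killed case we use the convention $X_t=\infty$ after the killing time and $e^{-s\cdot\infty}=0$. Since $\Psi$ is convex and $\Psi(0)=0$, $\Psi$ is strictly increasing on the relevant range. So for each $s>0$ there is a unique $\Phi(s)>0$ with $\Psi(\Phi(s))=s$; this is the choice of $\theta$ we fix.

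Next, $\tau_b^+$ is a stopping time: it is a hitting time of an open set by a right-continuous adapted process, and we work with the usual augmentation. Apply optional stopping to the bounded stopping time $\tau_b^+\wedge t$ with $\theta=\Phi(s)$:
\[
E\bigl[\exp\bigl(\Phi(s)X_{\tau_b^+\wedge t}-s(\tau_b^+\wedge t)\bigr)\bigr]=1 .
\]
We then let $t\to\infty$. On $\{\tau_b^+>t\}$ we have $X_t\le b$, so that part of the integrand is at most $e^{\Phi(s)b-st}\to0$. On $\{\tau_b^+\le t\}$ the integrand increases to $\exp(\Phi(s)X_{\tau_b^+}-s\tau_b^+)$. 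Monotone and dominated convergence then give
\[
E\bigl[\exp\bigl(-s\tau_b^+\bigr)\exp\bigl(\Phi(s)X_{\tau_b^+}\bigr)\bigr]=1 .
\]
Here $\tau_b^+<\infty$ a.s. whenever the subordinator is not identically zero.

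The final step, and the main obstacle, is to replace $X_{\tau_b^+}$ by $b$. This is exact precisely when the process does not overshoot the level, i.e. $X_{\tau_b^+}=b$ a.s. That holds for a pure drift and, more generally, for processes that pass levels upward continuously; this is the spectrally negative setting in which the cited theorem is classically stated. Under that condition the display becomes $e^{\Phi(s)b}E[e^{-s\tau_b^+}]=1$, which is the claim.

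For a subordinator with genuine jumps, such as the Poisson and compound Poisson components used later in the paper, the overshoot $X_{\tau_b^+}-b$ is generally positive. The argument then only yields the identity
\[
E\bigl[e^{-s\tau_b^+}e^{\Phi(s)(X_{\tau_b^+}-b)}\bigr]=e^{-b\Phi(s)},
\]
hence the bound $E[e^{-s\tau_b^+}]\le e^{-b\Phi(s)}$. The statement therefore has to be read either under a no-overshoot (creeping) hypothesis or as this overshoot-corrected identity. To pin down the exact formula in the jump case, I would first try the Bertoin fluctuation approach: use the potential measure $U$ with $\int e^{-\lambda x}U(dx)=1/(s+\phi(\lambda))$ and the first-passage formula $E[e^{-s\tau_b^+}]=1-s\,U^{(s)}[0,b]$. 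That formula is exact for all subordinators. Whether it reduces to $e^{-b\Phi(s)}$ must then be checked case by case, and in general it does not.
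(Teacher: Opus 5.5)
Your analysis is correct, and it is more careful than the paper. The paper gives no proof of this statement. It only cites Bertoin and Sato, and for a subordinator with jumps the identity is false. The overshoot-corrected identity you obtain by optional stopping,
\[
E\bigl[e^{-s\tau_b^+}e^{\Phi(s)(X_{\tau_b^+}-b)}\bigr]=e^{-b\Phi(s)},
\]
is the correct general statement.

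In the cited sources the formula $E[e^{-s\tau_b^+}]=e^{-b\Phi(s)}$ is proved for spectrally \emph{negative} Lévy processes. There, upward passage is continuous, so $X_{\tau_b^+}=b$ a.s., and $\Psi$ is automatically finite on $[0,\infty)$. What is a ``possibly killed subordinator'' in that theorem is the first-passage process $b\mapsto\tau_b^+$, not $X$ itself; this plausibly explains the paper's parenthetical. Since a subordinator has only positive jumps, the cited result covers only pure drifts $X_t=\mu t$.

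You can close the argument more decisively than your last paragraph does; the potential-measure detour and the ``case by case'' check are unnecessary. The quantity $e^{-s\tau_b^+}\bigl(e^{\Phi(s)(X_{\tau_b^+}-b)}-1\bigr)$ is nonnegative and strictly positive on the overshoot event. So your identity already gives equality if and only if $P(X_{\tau_b^+}>b)=0$. For a subordinator with nonzero Lévy measure the overshoot is positive with positive probability for every $b>0$, so in fact $E[e^{-s\tau_b^+}]<e^{-b\Phi(s)}$ strictly.

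An explicit counterexample makes this airtight. For $X_t=N_t$ with rate $\lambda$ and $b\in(0,1)$, $\tau_b^+$ is the first jump time, so $E[e^{-s\tau_b^+}]=\lambda/(\lambda+s)$, whereas $e^{-b\Phi(s)}=(\lambda/(\lambda+s))^{b}$. For the paper's own $D_t=t+N_t$ with $\lambda=1$ and level $0.1$, one has $T_1=\min(0.1,E_1)$ with $E_1\sim\mathrm{Exp}(1)$. Hence $E[T_1]=1-e^{-0.1}\approx 0.095$, while the drifted-Poisson corollary gives $0.1/(\mu+\alpha\lambda)=0.05$. Differentiating your identity at $s=0$ gives Wald's identity $E[\tau_b^+]=\bigl(b+E[X_{\tau_b^+}-b]\bigr)/m$. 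This shows exactly what the paper's mean and variance formulas drop: the overshoot.

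Three minor points:
\begin{itemize}
\item In the killed case, the convention $X=\infty$ after the killing time $\zeta$ is incompatible with $e^{\theta X_t}$ for $\theta>0$, and $\Psi(0)=-k\neq 0$. You must work on $\{t<\zeta\}$.
\item For subordinators, existence of $\Phi(s)$ for every $s>0$ is not automatic. It requires exponential moments of the jumps and $\Psi$ unbounded on its domain; with Pareto jumps, $\Psi\equiv\infty$ on $(0,\infty)$.
\item \emph{Creeping} means $P(X_{\tau_b^+}=b)>0$. A subordinator with positive drift and jumps creeps, yet still overshoots with positive probability. The hypothesis you need is no overshoot almost surely, which among subordinators means pure drift.
\end{itemize}
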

\noindent Using this theorem, we will now examine the Laplace transforms of specific subordinators in detail.

\section{Main Results}
\noindent In \eqref{inventorylevel1} and \eqref{totalcost1}, 
the terms involving $T_n$ are difficult to compute unless its distribution is explicitly known. 
In general, there are only few subordinators for which the distribution of the FPT can be written explicitly. 
In this section, we derive the Laplace transform of the FPT for a specific subordinator.

\subsection{Case where the Cumulative Demand is a Drifted Poisson Process}

\begin{dfn}[\textbf{Cumulative Demand}]
Let $\mu > 0$, $\lambda > 0$, and $\alpha > 0$, and let $\{N_t\}_{t \ge 0}$ be a $\lambda$-Poisson process. 
We define the cumulative demand process $\{D_t\}$ as follows:
\begin{equation}
D_t = \mu t + \alpha N_t.
\end{equation}
\end{dfn}

\noindent We review the special function used in deriving the Laplace transform of the first passage time of $\{D_t\}$.

\begin{dfn}[\textbf{Lambert W function}~\cite{MR1414285}]
The complex function $W(z)$ satisfying
\[
W(z)e^{W(z)} = z
\]
is called the Lambert W function.
\end{dfn}

\noindent By choosing an appropriate branch, $W(z)$ can be regarded as a function from $\mathbb{R}$ to $\mathbb{R}$, and results concerning its derivative are also known.

\begin{lem}[\textbf{Derivative of the Lambert W function}\cite{MR1414285}]
For the real-valued Lambert W function $W(x)$,
\begin{equation*}
\frac{d}{dx}W(x) 
= \frac{1}{(1+W(x))\exp(W(x))}
=\frac{W(x)}{x(1+W(x))} ,
\quad \text{if }x \not=0.
\end{equation*}
\end{lem}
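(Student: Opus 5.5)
The plan is to differentiate the defining identity $W(x)e^{W(x)}=x$ implicitly, on the real branch where $W$ is differentiable. On that branch $W$ is the inverse of the map $f(w)=we^{w}$. We have $f'(w)=(1+w)e^{w}$, which is nonzero for $w\neq -1$, i.e.\ away from the branch point $x=-1/e$.

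By the inverse function theorem, $W$ is differentiable at every such point, with
\[
W'(x)=\frac{1}{f'(W(x))}=\frac{1}{(1+W(x))e^{W(x)}} .
\]
Equivalently, differentiating $W(x)e^{W(x)}=x$ by the chain rule gives $W'(x)e^{W(x)}(1+W(x))=1$. This proves the first equality.

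For the second equality, assume $x\neq 0$. Then $W(x)\neq 0$, since $W(0)=0$ is the only preimage of $0$. From the defining relation we get $e^{W(x)}=x/W(x)$. Substituting this into the first expression gives
\[
\frac{1}{(1+W(x))\,x/W(x)}=\frac{W(x)}{x(1+W(x))},
\]
which is the second form. The hypothesis $x\neq 0$ is needed exactly for this division; at $x=0$ the first form still gives $W'(0)=1$.

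The only delicate point is justifying differentiability, that is, excluding $x=-1/e$ where $1+W=0$. I would state this explicitly: the formula holds on the open domain of the chosen real branch ($x>-1/e$ for the principal branch $W_0$, and $-1/e<x<0$ for $W_{-1}$). In the paper's later application the argument will lie strictly inside this domain, so the lemma suffices there.
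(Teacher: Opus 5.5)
Your argument is correct. The paper does not prove this lemma; it only quotes it from Corless et al.~\cite{MR1414285}. Your derivation is the standard one behind that citation: apply the inverse function theorem to $f(w)=we^{w}$, then substitute $e^{W(x)}=x/W(x)$ for $x\neq 0$. It therefore makes the result self-contained.

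You also sharpen the statement. The paper only excludes $x=0$, but as you note the formula must also exclude the branch point $x=-1/e$. There $1+W(x)=0$, $W$ is not differentiable, and the right-hand side is undefined. You also correctly observe that $x\neq 0$ is needed only for the second form.

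In the paper's application in Theorem~\ref{maintheo1} and Corollary~\ref{cor1}, the argument $\frac{\alpha\lambda}{\mu}\exp\bigl(\frac{\alpha}{\mu}(s+\lambda)\bigr)$ is strictly positive. So only the principal branch $W_0$ is involved, and your differentiability hypothesis is automatically satisfied there.
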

\noindent The Laplace transform of the FPT for a drifted Poisson process can be expressed using the Lambert W function.
\begin{theo} \label{maintheo1}
Let $T_n$ be the FPT of the drifted Poisson process $D_t=\mu t + \alpha N_t$ defined above. 
Let $W(x)$ denote the Lambert W function. 
Then the Laplace transform of $T_n$ is given as follows:
\begin{equation}
E[e^{-sT_n}] 
= \exp
\left\{
-(a+(n-1)Q)
\left(
-\frac{1}{\alpha} W \left( \frac{\alpha \lambda }{\mu}\exp\left(\frac{\alpha}{\mu} \left(s + \lambda\right)\right) \right)+ \frac{1}{\mu}\left(s + \lambda\right)
\right)
\right\}.
\end{equation}
\end{theo}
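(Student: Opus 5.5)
The plan is to apply the Theorem quoted from Bertoin and Sato directly, with level $b=a+(n-1)Q$. The first step is to reconcile the two definitions of the passage time. The paper defines $T_n$ with ``$\ge$'' and the Theorem uses ``$>$''. Since $\mu>0$, the paths $t\mapsto D_t=\mu t+\alpha N_t$ are strictly increasing, so the events $\{D_s\ge b\}$ and $\{D_s>b\}$ differ only at a single time. Hence the two infima coincide almost surely, and $E[e^{-sT_n}]=\exp(-b\,\Phi(s))$ with $\Phi$ the inverse of the exponent $\Psi$.

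The second step is to compute $\Psi$, following the convention stated in the Theorem, $\Psi(\theta)=\log E[e^{\theta D_1}]$. By independence of the drift and the Poisson part, and using $E[e^{\theta\alpha N_1}]=\exp(\lambda(e^{\alpha\theta}-1))$, we get
\[
\Psi(\theta)=\mu\theta+\lambda\bigl(e^{\alpha\theta}-1\bigr).
\]
This function is continuous and strictly increasing, with $\Psi(0)=0$. So for each $s>0$ there is a unique $\Phi(s)>0$ with $\Psi(\Phi(s))=s$.

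The third step, which is the heart of the argument, is to solve $\mu\Phi+\lambda(e^{\alpha\Phi}-1)=s$ in closed form. I would introduce the shifted unknown $y=\frac{s+\lambda}{\mu}-\Phi$, so that the equation reads $\mu y=\lambda e^{\alpha\Phi}=\lambda e^{\alpha(s+\lambda)/\mu}e^{-\alpha y}$. Multiplying by $\alpha/\mu$ and rearranging gives
\[
(\alpha y)\,e^{\alpha y}=\frac{\alpha\lambda}{\mu}\exp\!\Bigl(\frac{\alpha}{\mu}(s+\lambda)\Bigr),
\]
which is exactly the defining equation of the Lambert W function. Therefore $\alpha y=W\bigl(\frac{\alpha\lambda}{\mu}e^{\alpha(s+\lambda)/\mu}\bigr)$, and hence
\[
\Phi(s)=\frac{s+\lambda}{\mu}-\frac{1}{\alpha}W\Bigl(\frac{\alpha\lambda}{\mu}e^{\alpha(s+\lambda)/\mu}\Bigr).
\]
Substituting this into $\exp(-b\Phi(s))$ with $b=a+(n-1)Q$ yields the formula in Theorem \ref{maintheo1}.

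The main obstacle I expect is the branch issue. The argument of $W$ is strictly positive, and on $(0,\infty)$ the map $w\mapsto we^w$ is a bijection onto $(0,\infty)$, so the principal real branch is the unique admissible choice. I would also check consistency: $y=\lambda e^{\alpha\Phi}/\mu$ is positive, and the resulting $\Phi$ is the unique positive root found above. If the intended convention were instead the standard subordinator exponent $-\log E[e^{-\theta D_1}]$, the same substitution would produce a negative Lambert argument. So I would state clearly that the Theorem's convention $\Psi(\theta)=\log E[e^{\theta X_1}]$ is the one being used, since that is what yields the stated formula.
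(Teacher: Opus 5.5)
Your route is the paper's route: quote the first-passage identity with $b=a+(n-1)Q$, compute $\psi(\theta)=\mu\theta+\lambda(e^{\alpha\theta}-1)$, and invert it with the Lambert $W$ function. The inversion is correct algebra, and your branch discussion is fine. The gap is the step $E[e^{-sT_n}]=\exp(-b\,\Phi(s))$. It cannot be closed, because that identity is false for this process, and the paper's proof fails at exactly the same place.

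The result being quoted is the upward first-passage identity for \emph{spectrally negative} L\'evy processes (Bertoin, Ch.~VII). Its proof uses that such processes creep upward, i.e.\ $X_{\tau_b^+}=b$. A subordinator with jumps overshoots the level instead. Concretely, for $s>0$ put $\theta=\Phi(s)>0$. Then $\exp(\theta D_t-st)$ is a martingale, and $T_n\le b/\mu$ since $D_t\ge\mu t$, so optional stopping gives $E[\exp(\theta D_{T_n}-sT_n)]=1$. Now $D_{T_n}\ge b$ always. Moreover $D_{T_n}>b$ on the event that the first jump time $\tau_1$ lies in $\bigl(\max(0,(b-\alpha)/\mu),\,b/\mu\bigr)$, which has positive probability. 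Hence $E[e^{-sT_n}]<e^{-b\Phi(s)}$ strictly, so your closed form is only an upper bound. Your remark on conventions was the place to catch this. You chose $\Psi(\theta)=\log E[e^{\theta X_1}]$ because it reproduces the target formula, instead of checking whether the identity survives upward jumps.

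A direct check confirms the failure. Take $n=1$ and $a<\alpha$, so $b=a$. The level is reached either by drift at time $b/\mu$ or by the first jump, so $T_1=\min(b/\mu,\tau_1)$ and $E[T_1]=(1-e^{-\lambda b/\mu})/\lambda$. With $\mu=\lambda=a=1$ and $\alpha=2$ this equals $1-e^{-1}\approx 0.63$. Differentiating the claimed transform at $s=0$ gives instead $b/(\mu+\alpha\lambda)=1/3$.

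In general, Wald's identity gives $E[T_n]=E[D_{T_n}]/(\mu+\alpha\lambda)=\bigl(b+E[D_{T_n}-b]\bigr)/(\mu+\alpha\lambda)$. So the stated transform, and the mean in Corollary~\ref{cor1}, omit the expected overshoot.

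Any correct result must account for the overshoot. Write $T_b$ for the first passage time of level $b$ and $\kappa(q)=-\log E[e^{-qD_1}]=\mu q+\lambda(1-e^{-\alpha q})$. Then $\int_0^\infty e^{-qb}E[e^{-sT_b}]\,db=\kappa(q)/\bigl(q(s+\kappa(q))\bigr)$. This equals $1/(q+\Phi(s))$ for all $q,s$ only when $\kappa$ is linear, i.e.\ when there are no jumps. So the statement as written cannot be proved.
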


\begin{proof}
We compute the Laplace exponent.
\begin{equation*}
\psi(\theta) 
= \log E[e^{\theta D_1}]
= \theta \mu  +\lambda  (e^{\theta \alpha}-1)
\end{equation*}

\noindent The Laplace transform of $T_n$ is given by
\begin{equation*}
E[e^{-sT_n}] 
= \exp(-(a+(n-1)Q)\Phi(s)) 
\quad \text{, where} \ \psi(\Phi(s)) = s.
\end{equation*}

\noindent We compute the inverse function of $\psi$.
\begin{align*}
s &= \psi(\theta) = \theta \mu  +\lambda  (e^{\theta \alpha}-1) \\
s &= \theta \mu  +\lambda  (e^{\theta \alpha}-1) \\
\lambda e^{\theta \alpha } &=-\theta \mu + s + \lambda \\
\lambda e^{\theta \alpha } &=-\theta \mu + A \quad \quad (A=s + \lambda) \\
y &= \lambda \exp\left(- \alpha\frac{y-A}{\mu} \right) \quad \quad (y = -\theta \mu + A) \\
y &= \lambda \exp\left(\frac{\alpha A}{\mu} \right)  \exp\left(- \frac{\alpha y}{\mu} \right)  \\
y \exp\left(\frac{\alpha}{\mu} y\right) &= \lambda \exp\left(\frac{\alpha}{\mu} A\right) \\
\frac{\alpha}{\mu} y \exp\left(\frac{\alpha}{\mu} y\right) &= \frac{\alpha \lambda }{\mu}\exp\left(\frac{\alpha}{\mu} A\right) \\
\frac{\alpha}{\mu} y  &= W \left( \frac{\alpha \lambda }{\mu}\exp\left(\frac{\alpha}{\mu} A\right) \right) \quad (W : \text{Lambert } W \text{ function})\\
-\alpha \theta + \frac{\alpha}{\mu}A &= W \left( \frac{\alpha \lambda }{\mu}\exp\left(\frac{\alpha}{\mu} A\right) \right) \\
\theta &=  -\frac{1}{\alpha} W \left( \frac{\alpha \lambda }{\mu}\exp\left(\frac{\alpha}{\mu} A\right) \right)+ \frac{1}{\mu}A
\end{align*}

\noindent Therefore,
\begin{equation*}
\Phi(s)= -\frac{1}{\alpha} W \left( \frac{\alpha \lambda }{\mu}\exp\left(\frac{\alpha}{\mu} \left(s + \lambda\right)\right) \right)+ \frac{1}{\mu}\left(s + \lambda\right).
\end{equation*}

\noindent Hence, the Laplace transform of $T_n$ is given by
\begin{equation*}
E[e^{-sT_n}] 
= \exp
\left\{
-(a+(n-1)Q)
\left(
-\frac{1}{\alpha} W \left( \frac{\alpha \lambda }{\mu}\exp\left(\frac{\alpha}{\mu} \left(s + \lambda\right)\right) \right)+ \frac{1}{\mu}\left(s + \lambda\right)
\right)
\right\}.
\end{equation*}
\end{proof}

\begin{cor} \label{cor1}Under the assumptions of Theorem ~\ref{maintheo1} , it holds that
\begin{align}
E[T_n]
&= \frac{a+(n-1)Q}{\mu + \alpha \lambda},
\end{align}
\begin{align}
\mathrm{Var}[T_n] &= \frac{\alpha^2 \lambda (a+(n-1)Q)}{\left( \mu + \alpha \lambda \right)^3} .
\end{align}
\end{cor}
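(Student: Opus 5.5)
Set $b=a+(n-1)Q$ and $L(s)=E[e^{-sT_n}]=\exp(-b\Phi(s))$, with $\Phi$ the inverse of $\psi(\theta)=\mu\theta+\lambda(e^{\alpha\theta}-1)$ from the proof of Theorem~\ref{maintheo1}. The moments come from derivatives at $s=0$: $E[T_n]=-L'(0)$ and $E[T_n^2]=L''(0)$. Since $\log L(s)=-b\Phi(s)$ is the cumulant generating function of $-T_n$, we have
\[
E[T_n]=b\,\Phi'(0),\qquad \mathrm{Var}[T_n]=-b\,\Phi''(0).
\]
So everything reduces to computing $\Phi(0)$, $\Phi'(0)$ and $\Phi''(0)$.

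I will get these by implicit differentiation of $\psi(\Phi(s))=s$, not from the Lambert $W$ formula. First, $\Phi(0)=0$: since $\psi(0)=0$ and $\psi$ is strictly increasing on $[0,\infty)$ with $\psi'(\theta)=\mu+\alpha\lambda e^{\alpha\theta}>0$, the relevant root at $s=0$ is $\theta=0$. One checks this agrees with the principal branch in Theorem~\ref{maintheo1}: $W\bigl(\tfrac{\alpha\lambda}{\mu}e^{\alpha\lambda/\mu}\bigr)=\tfrac{\alpha\lambda}{\mu}$, which gives $\Phi(0)=0$.

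Differentiating $\psi(\Phi(s))=s$ once gives $\psi'(\Phi)\Phi'=1$, so
\[
\Phi'(0)=\frac{1}{\psi'(0)}=\frac{1}{\mu+\alpha\lambda}.
\]
Differentiating again gives $\psi''(\Phi)(\Phi')^2+\psi'(\Phi)\Phi''=0$, so
\[
\Phi''(0)=-\frac{\psi''(0)}{\psi'(0)^3}=-\frac{\alpha^2\lambda}{(\mu+\alpha\lambda)^3}.
\]
Substituting these into the identities above gives exactly the two stated formulas.

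The only step needing care is justifying differentiation at $s=0$ and the identification of the branch. Differentiability follows from the inverse function theorem, because $\psi$ is smooth with $\psi'>0$ near $0$. Alternatively, one can differentiate the explicit $W$-expression using the Lemma on $W'$; this gives the same result, $\Phi'(s)=\tfrac{1}{\mu}-\tfrac{1}{\mu}\tfrac{W}{1+W}$, which at $W=\alpha\lambda/\mu$ equals $1/(\mu+\alpha\lambda)$.

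Finiteness of the moments is immediate, since $T_n\le b/\mu$ deterministically. This also means the Laplace transform is entire in $s$, so differentiating under the expectation at $s=0$ is legitimate.
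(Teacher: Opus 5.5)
The step that fails is your starting identity $L(s)=\exp(-b\Phi(s))$ with $\Phi=\psi^{-1}$, taken from Theorem~\ref{maintheo1}. The paper's proof rests on exactly the same identity. That identity is false for $D_t=\mu t+\alpha N_t$, and so is the corollary.

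The formula $E[e^{-s\tau_b^+}]=e^{-b\Phi(s)}$ is the fluctuation identity for spectrally negative L\'evy processes. It needs $X_{\tau_b^+}=b$, so that by the strong Markov property $\tau^+_{b+c}$ is $\tau^+_b$ plus an independent copy of $\tau^+_c$. A subordinator with jumps overshoots the level, and this factorization breaks.

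You can check this directly. $D_t-(\mu+\alpha\lambda)t=\alpha(N_t-\lambda t)$ is a martingale, and $T_n\le b/\mu$ is bounded (as you noted), so optional stopping gives
\[
E[T_n]=\frac{b+E[D_{T_n}-b]}{\mu+\alpha\lambda}.
\]
The overshoot $D_{T_n}-b$ lies in $[0,\alpha]$. It is strictly positive with positive probability, for instance when the first jump occurs at a time $t$ with $b-\alpha<\mu t<b$. Hence $b/(\mu+\alpha\lambda)<E[T_n]\le(b+\alpha)/(\mu+\alpha\lambda)$ for every $b>0$, which contradicts the claimed mean.

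Your own bound already signals the problem. $T_n=b/\mu$ whenever $N$ has no jump in $[0,b/\mu)$, so $E[T_n]\ge\frac{b}{\mu}e^{-\lambda b/\mu}$, and this exceeds $b/(\mu+\alpha\lambda)$ for small $b$. For $b<\alpha$ one has exactly $T_n=\min(\tau_1,b/\mu)$, with $\tau_1$ the first jump time, so $E[T_n]=(1-e^{-\lambda b/\mu})/\lambda$.

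The stated mean and variance describe only the leading-order behaviour as $b\to\infty$ (renewal theory). Exact moments have to be computed from $P(T_n>t)=P(D_t<b)$, e.g.\ $E[T_n]=\int_0^{b/\mu}P(D_t<b)\,dt$, not from $\psi^{-1}$.

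Granting the identity, your computation is correct and takes a different route from the paper's. The paper differentiates the explicit Lambert $W$ expression twice via the lemma on $W'$, evaluates at $W\bigl(\tfrac{\alpha\lambda}{\mu}e^{\alpha\lambda/\mu}\bigr)=\tfrac{\alpha\lambda}{\mu}$, and forms $E[T_n^2]-E[T_n]^2$. You instead use $\Phi'(0)=1/\psi'(0)$ and $\Phi''(0)=-\psi''(0)/\psi'(0)^3$, and read the variance off the cumulant generating function as $-b\Phi''(0)$. Your route avoids the special function entirely, and it is the one the paper itself switches to in Theorems~\ref{maintheo3} and~\ref{maintheo4}. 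But no choice of differentiation scheme repairs a false input, so the argument does not establish the statement.
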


\begin{proof}
Let
\begin{equation*}
\Phi(s)\coloneqq \frac{1}{\alpha} W \left( \frac{\alpha \lambda }{\mu}\exp\left(\frac{\alpha}{\mu} \left(s + \lambda\right)\right) \right)- \frac{1}{\mu}\left(s + \lambda\right)
,\qquad 
K \coloneqq a + (n-1)Q
\end{equation*}
be defined. We will organize the differentiation rules for the Laplace transform. We have
\begin{align*}
\frac{d}{ds} E[e^{-sT_n}] &= \frac{d}{ds} \exp(-K\Phi(s)) 
= -K \exp(-K\Phi(s)) \frac{d \Phi}{ds} (s) \\
&= -K E[e^{-sT_n}] \frac{d \Phi}{ds} (s) = -K E[e^{-sT_n}] \frac{d \Phi}{ds} (s) = -K  \exp(-K\Phi(s))  \frac{d \Phi}{ds} (s).
\end{align*}
We will also organize the second-order derivatives of the Laplace transform. We get
\begin{align*}
\frac{d^2}{ds^2} E[e^{-sT_n}] &= \frac{d}{ds} \left( -K E[e^{-sT_n}] \frac{d \Phi}{ds} (s) \right) \\ 
&= -K \left(\frac{d}{ds} E[e^{-sT_n}] \frac{d \Phi}{ds} (s) +  E[e^{-sT_n}] \frac{d^2 \Phi}{ds^2} (s) \right) \\
&= -K \left(-K E[e^{-sT_n}] \frac{d \Phi}{ds} (s) \times  \frac{d \Phi}{ds} (s) +  E[e^{-sT_n}] \frac{d^2 \Phi}{ds^2} (s) \right) \\
&= -K E[e^{-sT_n}] \left(-K \left( \frac{d \Phi}{ds} (s) \right)^2+   \frac{d^2 \Phi}{ds^2} (s) \right) \\
&= -K \exp(-K\Phi(s))  \left(-K \left( \frac{d \Phi}{ds} (s) \right)^2+   \frac{d^2 \Phi}{ds^2} (s) \right) .
\end{align*}
\begin{align*}
\frac{d \Phi}{ds} (s) &= \frac{d}{ds} \left( -\frac{1}{\alpha} W \left( \frac{\alpha \lambda }{\mu}\exp\left(\frac{\alpha}{\mu} \left(s + \lambda\right)\right) \right)
+ \frac{1}{\mu}\left(s + \lambda\right) \right) \\
&= -\frac{1}{\alpha} \frac{dW}{ds}   \left( \frac{\alpha \lambda }{\mu}\exp\left(\frac{\alpha}{\mu} \left(s + \lambda\right)\right) \right) 
+ \frac{1}{\mu } .
\end{align*}
Compute the first and second derivatives of $\Phi$. We have
Let $B = \frac{\alpha}{\mu}$ and $C = \frac{\alpha \lambda }{\mu}$.
\begin{align*}
\frac{d \Phi}{ds} (s) 
&=  -\frac{1}{\alpha}\frac{dW}{ds}   \left( C \exp\left(B s +C\right) \right) 
 + \frac{1}{\mu } .
\end{align*}
\noindent Furthermore, 
\begin{align*}
\frac{d^2 \Phi}{ds^2} (s) 
&=  -\frac{1}{\alpha}\frac{d^2W}{ds^2} \left( C \exp\left(B s +C\right) \right).
\end{align*}
\noindent The derivative of $W(C \exp(Bs +C))$ can be computed as follows:
\begin{align*}
\frac{dW}{ds}   \left( C \exp\left(B s +C\right) \right) 
&= \frac{W \left( C \exp\left(B s +C\right) \right)}{1+ W \left( C \exp\left(B s +C\right) \right)}  
\left( C \exp\left(B s +C\right) \right)^{-1} 
\left( BC \exp\left(B s +C\right) \right) \\
&= B\frac{W \left( C \exp\left(B s +C\right) \right)}{1+ W \left( C \exp\left(B s +C\right) \right)}  .
\end{align*}
\noindent Furthermore,
\begin{align*}
&\quad  \frac{d^2W}{ds^2} \left( C \exp\left(B s +C\right) \right) \\
&= B\frac{d}{ds} \left( \frac{W \left( C \exp\left(B s +C\right) \right)}{1+ W \left( C \exp\left(B s +C\right) \right)} \right)\\
&= B \frac{dW}{ds} \left( C \exp\left(B s +C\right) \right) 
\frac{1}{1+ W \left( C \exp\left(B s +C\right) \right)} \\
&\quad \quad \quad \quad \quad \quad \quad \quad 
+ B \times W \left( C \exp \left(B s +C\right) \right) 
\frac{d}{ds} \left( \frac{1}{1+ W \left( C \exp\left(B s +C\right) \right)} \right) \\
&= B \times B\frac{W \left( C \exp\left(B s +C\right) \right)}{1+ W \left( C \exp\left(B s +C\right) \right)}  
\times \frac{1}{1+ W \left( C \exp\left(B s +C\right) \right)} \\
&\quad \quad \quad \quad \quad \quad \quad \quad 
+ B \times W \left( C \exp \left(B s +C\right) \right) 
\times (-1) \frac{ \frac{dW}{ds} \left( C \exp\left(B s +C\right) \right) }{(1+ W \left( C \exp\left(B s +C\right) \right))^2}  \\
&= B \times B\frac{W \left( C \exp\left(B s +C\right) \right)}{1+ W \left( C \exp\left(B s +C\right) \right)}  
\times \frac{1}{1+ W \left( C \exp\left(B s +C\right) \right)} \\
&\quad \quad 
- B \times W \left( C \exp \left(B s +C\right) \right) 
\times  \frac{1}{(1+ W \left( C \exp\left(B s +C\right) \right))^2} \times B\frac{W \left( C \exp\left(B s +C\right) \right)}{1+ W \left( C \exp\left(B s +C\right) \right)}  \\
&= B^2 \frac{W \left( C \exp\left(B s +C\right) \right)}{(1+ W \left( C \exp\left(B s +C\right) \right))^2}   
- B^2 \frac{(W \left( C \exp\left(B s +C\right) \right))^2}{(1+ W \left( C \exp\left(B s +C\right) \right))^3}  \\
&= B^2 \frac{W \left( C \exp\left(B s +C\right) \right)}{(1+ W \left( C \exp\left(B s +C\right) \right))^2}   
\left(1 - \frac{(W \left( C \exp\left(B s +C\right) \right))}{(1+ W \left( C \exp\left(B s +C\right) \right))} \right) \\
&= B^2 \frac{W \left( C \exp\left(B s +C\right) \right)}{(1+ W \left( C \exp\left(B s +C\right) \right))^2}   
\times \frac{1}{1+ W \left( C \exp\left(B s +C\right) \right)}  \\
&= B^2 \frac{W \left( C \exp\left(B s +C\right) \right)}{(1+ W \left( C \exp\left(B s +C\right) \right))^3} .
\end{align*}
\noindent It follows that
\begin{align*}
&\quad \frac{d}{ds} E[e^{-sT_n}] \\
&= -K  \exp(-K\Phi(s))  \frac{d \Phi}{ds} (s) \\
&= -(a+(n-1)Q)  \exp\left[ -(a+(n-1)Q) \left(-\frac{1}{\alpha} W \left( \frac{\alpha \lambda }{\mu}\exp\left(\frac{\alpha}{\mu} \left(s + \lambda\right)\right) \right)+ \frac{1}{\mu}\left(s + \lambda\right) \right)\right]  \\
&\quad \times  
\left(
-\frac{1}{\alpha}B\frac{W \left( C \exp\left(B s +C\right) \right)}{1+ W \left( C \exp\left(B s +C\right) \right)}   + \frac{1}{\mu } 
\right)\\
&= -(a+(n-1)Q)  \exp\left[ -(a+(n-1)Q) \left(-\frac{1}{\alpha} W \left( \frac{\alpha \lambda }{\mu}\exp\left(\frac{\alpha}{\mu} \left(s + \lambda\right)\right) \right)+ \frac{1}{\mu}\left(s + \lambda\right) \right)\right]  \\
&\quad \times  
\left(
-\frac{1}{\mu}
\frac{W \left( \frac{\alpha \lambda }{\mu} \exp\left(\frac{\alpha}{\mu} s +\frac{\alpha \lambda }{\mu}\right) \right)
}
{
1+ W \left( \frac{\alpha \lambda }{\mu} \exp\left(\frac{\alpha}{\mu} s +\frac{\alpha \lambda }{\mu}\right) \right)
}  
+ \frac{1}{\mu } 
\right).
\end{align*}

\noindent Therefore, the expected value of $E[T_n]$ is given by:
\begin{align*}
E[T_n]
&= - \left. \frac{d}{ds} E[e^{-sT_n}] \right| _{s=0}\\
&= (a+(n-1)Q)  \exp\left[ -(a+(n-1)Q) \left(\frac{1}{\alpha} W \left( \frac{\alpha \lambda }{\mu} e^{\frac{\alpha \lambda}{\mu} } \right)- \frac{\lambda}{\mu}\right)\right]  \\
&\quad \times  
\left( -\frac{1}{\mu} \frac{W \left(  \frac{\alpha \lambda }{\mu} e^{\frac{\alpha \lambda }{\mu} } \right)}{1+ W \left(  \frac{\alpha \lambda }{\mu} e^{\frac{\alpha \lambda}{\mu} } \right)} 
+ \frac{1}{\mu } 
\right)\\
&= (a+(n-1)Q)  \exp\left[ -(a+(n-1)Q) \left(\frac{1}{\alpha} \frac{\alpha \lambda }{\mu} - \frac{\lambda}{\mu}\right)\right] \times  
\left( -\frac{1}{\mu} \frac{\frac{\alpha \lambda }{\mu}}{1+ \frac{\alpha \lambda }{\mu}} 
+ \frac{1}{\mu } 
\right)\\
&=\frac{a+(n-1)Q }{\mu + \alpha \lambda}.
\end{align*}
Similarly, calculating the second moment yields:
\begin{align*}
&\quad \frac{d^2}{ds^2} E[e^{-sT_n}] \\
&= \frac{d}{ds} \left( -K E[e^{-sT_n}] \frac{d \Phi}{ds} (s) \right) \\ 
&= -K \exp(-K\Phi(s))  \left(-K \left( \frac{d \Phi}{ds} (s) \right)^2+   \frac{d^2 \Phi}{ds^2} (s) \right) \\
&= - (a+(n-1)Q)
\exp\left[ -(a+(n-1)Q) 
\left(
-\frac{1}{\alpha} W \left( \frac{\alpha \lambda }{\mu} e^{\frac{\alpha}{\mu} \left(s + \lambda\right)} \right)
+ \frac{1}{\mu}\left(s + \lambda\right) 
\right)\right] \\
&\quad \times\left(
-(a+(n-1)Q) \left( 
-\frac{1}{\alpha} \frac{dW}{ds}   \left( C \exp\left(B s +C\right) \right) + \frac{1}{\mu} 
\right)^2
\right.\\
&\quad \left.
-\frac{1}{\alpha} B^2 \frac{W \left( C \exp\left(B s +C\right) \right)}{(1+ W \left( C \exp\left(B s +C\right) \right))^3}   
\right) \\
&= - (a+(n-1)Q)
\exp\left[ -(a+(n-1)Q) 
\left(
-\frac{1}{\alpha} W \left( \frac{\alpha \lambda }{\mu} e^{\frac{\alpha}{\mu} \left(s + \lambda\right)} \right)
+ \frac{1}{\mu}\left(s + \lambda\right) 
\right)\right] \\
&\quad \times\left(
-(a+(n-1)Q) \left( 
-\frac{1}{\mu}
\frac{W \left( \frac{\alpha \lambda }{\mu} \exp\left(\frac{\alpha}{\mu} s +\frac{\alpha \lambda }{\mu}\right) \right)
}
{
1+ W \left( \frac{\alpha \lambda }{\mu} \exp\left(\frac{\alpha}{\mu} s +\frac{\alpha \lambda }{\mu}\right) \right)
}  
+ \frac{1}{\mu } \right)^2 \right. \\
&\quad
\left.
-\frac{\alpha}{\mu^2} \frac{W \left( \frac{\alpha \lambda }{\mu} \exp\left(\frac{\alpha}{\mu} s +\frac{\alpha \lambda }{\mu}\right) \right)}
{\left(1+ W \left( \frac{\alpha \lambda }{\mu} \exp\left(\frac{\alpha}{\mu} s +\frac{\alpha \lambda }{\mu}\right) \right)\right)^3}   
\right).
\end{align*}
Hense,
\begin{align*}
E[T_n^2]
&= - (a+(n-1)Q)
\times\left(
-(a+(n-1)Q) \left( 
-\frac{1}{\mu}
\frac{W \left(  \frac{\alpha \lambda }{\mu} e^{\frac{\alpha \lambda }{\mu} } \right)}{1+ W \left(  \frac{\alpha \lambda }{\mu} e^{\frac{\alpha \lambda}{\mu} } \right)} 
+ \frac{1}{\mu } \right)^2 \right. \\
&\quad
\left.
-\frac{\alpha}{\mu^2} 
\frac{W \left(  \frac{\alpha \lambda }{\mu} e^{\frac{\alpha \lambda }{\mu} } \right)}{\left(1+ W \left(  \frac{\alpha \lambda }{\mu} e^{\frac{\alpha \lambda}{\mu} } \right)\right)^3} 
\right) \\
&= - (a+(n-1)Q)
\times\left(
-(a+(n-1)Q) \left( 
-\frac{1}{\mu}
\frac{\frac{\alpha \lambda }{\mu}}{1+ \frac{\alpha \lambda }{\mu}} 
+ \frac{1}{\mu } \right)^2 \right. \\
&\quad
\left.
-\frac{\alpha}{\mu^2} 
\frac{\frac{\alpha \lambda }{\mu}}{\left(1+ \frac{\alpha \lambda }{\mu}\right)^3} 
\right) \\
&= \left( \frac{a+(n-1)Q}{\mu + \alpha \lambda } \right)^2
+\frac{\alpha^2 \lambda (a+(n-1)Q)}{\left( \mu + \alpha \lambda \right)^3}.
\end{align*}
From these results, we obtain
\noindent 
\begin{align*}
\Var[T_n] 
= E[T_n^2] - E[T_n]^2 
= \frac{\alpha^2 \lambda (a+(n-1)Q)}{\left( \mu + \alpha \lambda \right)^3}.
\end{align*}

\end{proof}

\subsection{Case where the Cumulative Demand is a Drifted Compound Poisson Process}
\quad \\
\noindent As a generalization of the drifted Poisson process, considering a drifted compound Poisson process is not only a mathematical generalization but also a valuable extension in practice, since it can reproduce situations in which the magnitude of sudden demand changes is random each time.

\begin{dfn}[Cumulative Demand]
Let $\mu > 0$, $\lambda > 0$, and let $\{N_t\}_{t \ge 0}$ be a $\lambda$-Poisson process.  
Let $\{J_k\}_{n\ge 1}$ be a sequence of random variables that are independent of $\Dt$ and independent and identically distributed.  
We define the cumulative demand process $\Dt$ as follows:
\begin{equation}
D_t = \mu t + \sum_{k=1}^{N_t} J_k. \label{driftedcompound}
\end{equation}
\end{dfn}

\noindent In this case, let $T_n$ be the FPT defined above, and let $\psi(\theta)$ be the Laplace exponent of $D_t$. Then,
\begin{equation*}
E[e^{-sT_n}] = \exp(-(a+(n-1)Q)\Phi(s)) \quad \text{,where} \ \psi(\Phi(s)) = s.
\end{equation*}

\noindent Let $L_{J_1}(\theta)$ denote the Laplace transform of $J_1$, and compute the Laplace exponent $\psi(\theta)$.
\begin{equation*}
\psi(\theta) = \log E[e^{\theta D_1}]
= \log E \left[\exp\left( \theta \mu + \theta \sum_{k=1}^{N_1} J_k \right)\right] 
= \theta \mu +\log E \left[\exp\left(\theta \sum_{k=1}^{N_1} J_k \right)\right]
\end{equation*}
\begin{align*}
E \left[\exp\left(\theta \sum_{k=1}^{N_1} J_k \right)\right]
&= E\left[ E \left[\exp\left(\theta \sum_{k=1}^{N_1} J_k \right) \ \middle| \ N_1 \right] \right] \\
&= \sum_{m \ge 0} E \left[\exp\left(\theta \sum_{k=1}^{m} J_k \right)\right]P(N_1 = m) \\
&= \sum_{m \ge 0} E \left[\exp(\theta J_1)\right]^m P(N_1 = m) \quad \because \ J_k \ \text{i.i.d.}\\
&= \sum_{m \ge 0} {L_{J_1}(\theta)}^m e^{-\lambda} \frac{\lambda^m}{m !} \\
&= e^{-\lambda} \exp(\lambda L_{J_1}(\theta))\\
\end{align*}
\noindent Therefore,
\begin{align*}
\log E \left[\exp\left(\theta \sum_{k=1}^{N_1} J_k \right)\right] 
&= \log \exp (\lambda(L_{J_1}(\theta)-1)) \\
&= \lambda(L_{J_1}(\theta)-1).
\end{align*}
\noindent Hence,
\begin{equation}
\psi(\theta) = \theta \mu +\lambda(L_{J_1}(\theta)-1) .\label{laplace_exponent}
\end{equation}

\subsection*{Case where the Jump Sizes Follow an Exponential Distribution}

\begin{theo} \label{maintheo2}
Let $\Dt$ be defined as in (\ref{driftedcompound}), and assume that $J_k$ are independent and identically distributed and follow an exponential distribution with parameter $\eta$.  
In this case, for $\theta < \eta$,
\begin{equation}
E[e^{-sT_n}] = \exp\left(-(a+(n-1)Q)\frac{1}{2 \mu } 
\left\{ 
(\mu \eta + s + \lambda)- \sqrt{(\mu \eta + s + \lambda)^2 - 4 \mu s \eta}
\right\}\right)  .
\end{equation} 
\end{theo}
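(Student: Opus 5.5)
We follow the same route as in the proof of Theorem~\ref{maintheo1}. We invoke the cited theorem of Bertoin and Sato, which gives $E[e^{-sT_n}]=\exp(-(a+(n-1)Q)\Phi(s))$ with $\psi(\Phi(s))=s$, together with the general formula \eqref{laplace_exponent} for the Laplace exponent of the drifted compound Poisson process. The whole task is therefore to identify $\Phi(s)$ explicitly when $J_1\sim\mathrm{Exp}(\eta)$.

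The first step is to compute the moment generating function of the jumps. For $\theta<\eta$ we have $L_{J_1}(\theta)=E[e^{\theta J_1}]=\eta/(\eta-\theta)$, which explains the restriction $\theta<\eta$ in the statement. Substituting this into \eqref{laplace_exponent} gives
\[
\psi(\theta)=\theta\mu+\lambda\left(\frac{\eta}{\eta-\theta}-1\right)=\theta\mu+\frac{\lambda\theta}{\eta-\theta},\qquad \theta<\eta .
\]

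The second step is to invert $\psi$. Setting $\psi(\theta)=s$ and multiplying by $\eta-\theta>0$ gives $\theta\mu(\eta-\theta)+\lambda\theta=s(\eta-\theta)$. This rearranges to the quadratic
\[
\mu\theta^2-(\mu\eta+s+\lambda)\theta+s\eta=0,
\]
whose roots are
\[
\theta_\pm=\frac{(\mu\eta+s+\lambda)\pm\sqrt{(\mu\eta+s+\lambda)^2-4\mu s\eta}}{2\mu}.
\]
The discriminant is positive, since $(\mu\eta+s+\lambda)^2-4\mu\eta s=(\mu\eta-s)^2+\lambda^2+2\lambda(\mu\eta+s)>0$, so both roots are real.

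The main obstacle is selecting the correct root, because $\Phi(s)$ must be the root lying in the admissible domain $[0,\eta)$ on which $\psi$ is a continuous increasing bijection onto $[0,\infty)$. We argue as follows. On $[0,\eta)$ the function $\psi$ is strictly increasing, with $\psi(0)=0$ and $\psi(\theta)\to\infty$ as $\theta\uparrow\eta$; hence for each $s>0$ there is exactly one solution in $(0,\eta)$. Next, let $f(\theta)=\mu\theta^2-(\mu\eta+s+\lambda)\theta+s\eta$. Then $f(0)=s\eta>0$ and $f(\eta)=-\lambda\eta<0$, so exactly one root lies in $(0,\eta)$, and it must be the smaller root $\theta_-$. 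The larger root satisfies $\theta_+>\eta$ and lies outside the domain where $L_{J_1}$ is finite. Therefore
\[
\Phi(s)=\frac{1}{2\mu}\Bigl\{(\mu\eta+s+\lambda)-\sqrt{(\mu\eta+s+\lambda)^2-4\mu s\eta}\Bigr\}.
\]

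Finally, substituting this $\Phi(s)$ into $\exp(-(a+(n-1)Q)\Phi(s))$ yields the stated formula. As a check, $\Phi(0)=0$ holds because the square root equals $\mu\eta+\lambda$ at $s=0$, consistent with $E[e^{-0\cdot T_n}]=1$.
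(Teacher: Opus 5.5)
Your algebra is correct and coincides with the paper's proof: the computation of $\psi$, the reduction to $\mu\theta^2-(\mu\eta+s+\lambda)\theta+s\eta=0$, and the choice of $\theta_-$. Your sign check $f(0)=s\eta>0>-\lambda\eta=f(\eta)$ is in fact a cleaner justification than the paper's one-line remark. The step that fails is the first one, which you take from the paper. The identity $E[e^{-s\tau_b^+}]=e^{-b\Phi(s)}$, with $\Phi$ the inverse of $\theta\mapsto\log E[e^{\theta X_1}]$, is the first-passage identity for \emph{spectrally negative} L\'evy processes, and it depends on upward creeping, $X_{\tau_b^+}=b$. Your $D$ jumps upward and overshoots $b=a+(n-1)Q$ with positive probability. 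Optional stopping of the martingale $e^{\Phi(s)D_t-st}$ at $T_n$ is justified, since $\Phi(s)<\eta$ and the overshoot is $\mathrm{Exp}(\eta)$ by memorylessness. But it only gives
\[
E\bigl[e^{-sT_n}\,e^{\Phi(s)(D_{T_n}-b)}\bigr]=e^{-b\Phi(s)},
\]
so in fact $E[e^{-sT_n}]<e^{-b\Phi(s)}$ for all $s,b>0$. The displayed formula is therefore false, and no root selection can repair the argument (the paper's proof has the same defect). A quick check: Wald's identity gives $E[T_n](\mu+\lambda/\eta)=E[D_{T_n}]>b$, whereas the claimed transform yields $E[T_n]=b\Phi'(0)=\eta b/(\mu\eta+\lambda)$.

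To get the correct answer, write $\tau_x=\inf\{t: D_t\ge x\}$. The identity $P(\tau_x>t)=P(D_t<x)$ gives $\int_0^\infty e^{-\beta x}E[e^{-s\tau_x}]\,dx=\phi(\beta)/\bigl(\beta(s+\phi(\beta))\bigr)$ with $\phi(\beta)=-\log E[e^{-\beta D_1}]=\mu\beta+\lambda\beta/(\eta+\beta)$. After clearing the factor $\eta+\beta$, the denominator is $\mu(\beta+\theta_-)(\beta+\theta_+)$, with exactly your two roots. Partial fractions then give
\[
E[e^{-sT_n}]=\frac{\theta_+(\eta-\theta_-)\,e^{-b\theta_-}+\theta_-(\theta_+-\eta)\,e^{-b\theta_+}}{\eta(\theta_+-\theta_-)},
\]
which a first-step (integro-differential) analysis confirms independently. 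So the root $\theta_+$ you discarded does contribute, and the stated formula keeps only the dominant exponential, with the wrong coefficient. The moments derived from it are affected too: the true mean is $\eta b/(\mu\eta+\lambda)+\lambda\bigl(1-e^{-b(\eta+\lambda/\mu)}\bigr)/(\mu\eta+\lambda)^2$.
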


\begin{proof}
Since $L_{J_1}(\theta)=\frac{\eta}{\eta-\theta}$, it follows from (\ref{laplace_exponent}) that
\begin{equation*}
\psi(\theta) = \theta \mu +\lambda\left(\frac{\eta}{\eta-\theta}-1\right) = \theta\mu + \frac{\lambda \theta}{\eta - \theta} .
\end{equation*}

\noindent We compute the inverse function of $\psi$.
\begin{align*}
s = \psi(\theta) &= \theta \mu +\lambda\left(\frac{\eta}{\eta-\theta}-1\right) = \theta\mu + \frac{\lambda \theta}{\eta - \theta} \\
\theta \mu (\eta- \theta)+ \lambda \theta &= s (\eta - \theta) \\
\mu \theta^2 - (\mu \eta + s + \lambda)\theta + s \eta &= 0 \\
\Phi(s) &= \theta 
= \frac{1}{2 \mu } 
\left\{ 
(\mu \eta + s + \lambda)- \sqrt{(\mu \eta + s + \lambda)^2 - 4 \mu s \eta}
\right\} .
\end{align*}

\noindent The other solution of the quadratic equation does not satisfy $\theta < \eta$, and is therefore not appropriate. Hence,
\begin{align*}
E[e^{-sT_n}] &= \exp(-(a+(n-1)Q)\Phi(s)) \\
&= \exp\left(-(a+(n-1)Q)
\frac{1}{2 \mu } 
\left\{ 
(\mu \eta + s + \lambda)- \sqrt{(\mu \eta + s + \lambda)^2 - 4 \mu s \eta}
\right\}\right).
\end{align*}
\end{proof}

\begin{cor} Under the assumptions of Theorem ~\ref{maintheo2}, it holds that
\begin{equation}
E[T_n] =  \frac{\eta(a+(n-1)Q)}{\mu \eta  + \lambda} ,
\end{equation}
\begin{equation}
\Var[T_n]= \frac{2\eta \lambda (a+(n-1)Q)}{(\mu \eta  + \lambda)^3} .
\end{equation}
\end{cor}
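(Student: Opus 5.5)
The plan is to work with the cumulant generating function of $T_n$ rather than differentiating the explicit square-root formula of Theorem~\ref{maintheo2} twice. Set $K \coloneqq a+(n-1)Q$. By Theorem~\ref{maintheo2}, $\log E[e^{-sT_n}] = -K\Phi(s)$, where $\Phi$ is the branch of the inverse of $\psi(\theta)=\theta\mu+\frac{\lambda\theta}{\eta-\theta}$ selected there. Differentiating $\log E[e^{-sT_n}]$ at $s=0$ gives the first two cumulants:
\[
E[T_n] = K\,\Phi'(0), \qquad \Var[T_n] = -K\,\Phi''(0).
\]
This follows because $\frac{d}{ds}\log E[e^{-sT_n}]\big|_{s=0}=-E[T_n]$ and $\frac{d^2}{ds^2}\log E[e^{-sT_n}]\big|_{s=0}=\Var[T_n]$. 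Working with the logarithm avoids the product-rule bookkeeping used in the proof of Corollary~\ref{cor1}.

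Next I would pin down $\Phi(0)$. At $s=0$ the quadratic $\mu\theta^2-(\mu\eta+\lambda)\theta=0$ has roots $0$ and $\eta+\lambda/\mu$. The minus-sign root chosen in Theorem~\ref{maintheo2} is the one satisfying $\theta<\eta$, so $\Phi(0)=0$. This is consistent with $E[e^{0\cdot T_n}]=1$.

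Rather than differentiating the closed form, I would differentiate the identity $\psi(\Phi(s))=s$ implicitly. This gives $\psi'(\Phi(s))\Phi'(s)=1$ and $\psi''(\Phi(s))\Phi'(s)^2+\psi'(\Phi(s))\Phi''(s)=0$, so that
\[
\Phi'(0)=\frac{1}{\psi'(0)}, \qquad \Phi''(0)=-\frac{\psi''(0)}{\psi'(0)^3}.
\]
Writing $\psi(\theta)=\theta\mu+\lambda\bigl(\frac{\eta}{\eta-\theta}-1\bigr)$, we have $\psi'(\theta)=\mu+\frac{\lambda\eta}{(\eta-\theta)^2}$ and $\psi''(\theta)=\frac{2\lambda\eta}{(\eta-\theta)^3}$. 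Hence $\psi'(0)=\frac{\mu\eta+\lambda}{\eta}$ and $\psi''(0)=\frac{2\lambda}{\eta^2}$. Substituting gives
\[
E[T_n]=\frac{\eta K}{\mu\eta+\lambda}, \qquad \Var[T_n]=K\cdot\frac{2\lambda}{\eta^2}\cdot\frac{\eta^3}{(\mu\eta+\lambda)^3}=\frac{2\eta\lambda K}{(\mu\eta+\lambda)^3},
\]
which are the claimed formulas.

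The only genuine obstacle is justifying the analytic steps. Three points need checking:
\begin{itemize}
\item $\Phi$ must be smooth near $s=0$. This holds because the discriminant $(\mu\eta+\lambda)^2>0$ at $s=0$, and because $\psi'(0)>0$, so the implicit function theorem applies.
\item The Laplace transform must be differentiable from the right at $s=0$ with derivatives equal to the moments. This holds because all moments of $T_n$ are finite: the drift gives the bound $T_n\le K/\mu$ almost surely.
\end{itemize}
As a sanity check, I would also verify $\Phi'(0)$ directly from the square-root expression. There $\Phi'(0)=\frac{1}{2\mu}\bigl(1-\frac{(\mu\eta+\lambda)-2\mu\eta}{\mu\eta+\lambda}\bigr)=\frac{\eta}{\mu\eta+\lambda}$, which agrees with the implicit computation.
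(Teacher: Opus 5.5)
Conditional on Theorem~\ref{maintheo2}, your computation is correct, and it follows a different route from the paper. The paper differentiates the explicit square-root formula for $\Phi$ twice, forms $E[T_n^2]=K^2\Phi'(0)^2-K\Phi''(0)$, and subtracts $E[T_n]^2$. You instead differentiate $\psi(\Phi(s))=s$ implicitly and read off the cumulants $K\Phi'(0)$ and $-K\Phi''(0)$; the paper uses this device only later, for Gamma jumps. Your route is shorter and shows that only $\psi'(0)$ and $\psi''(0)$ matter.

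The genuine gap is the step you take as given, $\log E[e^{-sT_n}]=-K\Phi(s)$. It is false for this $D$, so the corollary itself is false, and the paper's proof has exactly the same gap. The identity $E[e^{-s\tau_b^+}]=e^{-b\Phi(s)}$ with $\psi(\Phi(s))=s$ is Bertoin's theorem for L\'evy processes with \emph{no positive jumps}. In that theorem it is $b\mapsto\tau_b^+$ that is a subordinator, and the proof uses $X_{\tau_b^+}=b$. A subordinator with jumps overshoots the level. Concretely, since $\psi(\Phi(s))=s$, the process $e^{\Phi(s)D_t-st}$ is a martingale, and stopping it at the bounded time $T_n\le K/\mu$ gives
\[
E\Bigl[e^{-sT_n}\,e^{\Phi(s)(D_{T_n}-K)}\Bigr]=e^{-K\Phi(s)}.
\]
Theorem~\ref{maintheo2} amounts to silently setting the overshoot $D_{T_n}-K$ to $0$.

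The overshoot is not negligible. Let $\tau_1$ be the first jump time, and consider the event $\tau_1<K/\mu$, $J_1>K$, which has probability $(1-e^{-\lambda K/\mu})e^{-\eta K}>0$. On it, $T_n=\tau_1$ and $D_{T_n}-K=\mu\tau_1+J_1-K>0$.

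\begin{itemize}
\item Optional stopping of the martingale $D_t-\psi'(0)t$, using your own bound $T_n\le K/\mu$, gives $\psi'(0)E[T_n]=E[D_{T_n}]>K$. Hence $E[T_n]>\eta K/(\mu\eta+\lambda)$, contradicting the claimed mean.
\item Equivalently, the square-root formula gives $\Phi(s)\to\eta$ as $s\to\infty$. That would force $P(T_n=0)=e^{-K\eta}>0$, whereas $T_n\ge\min(\tau_1,K/\mu)>0$ almost surely.
\end{itemize}

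The correct starting point for a subordinator is $P(T_n>t)=P(D_t<K)$. Then $E[T_n]=\int_0^\infty P(D_t<K)\,dt=U([0,K))$, where $U(dx)=\int_0^\infty P(D_t\in dx)\,dt$ has Laplace transform $\int_0^\infty e^{-qx}U(dx)=\bigl(\mu q+\tfrac{\lambda q}{\eta+q}\bigr)^{-1}$. Inverting gives
\[
E[T_n]=\frac{\eta K}{\mu\eta+\lambda}+\frac{\lambda}{(\mu\eta+\lambda)^2}\Bigl(1-e^{-(\mu\eta+\lambda)K/\mu}\Bigr).
\]
For $\mu=\lambda=\eta=K=1$ this is about $0.716$, not $0.5$. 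The variance likewise has to come from $E[T_n^2]=2\int_0^\infty t\,P(D_t<K)\,dt$, not from derivatives of $\Phi$.
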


\begin{proof}The calculation follows the same approach as Corollary~\ref{cor1}.
Let
\begin{equation*}
\Phi(s) \coloneqq
\frac{1}{2 \mu } 
\left\{ 
(\mu \eta + s + \lambda)- \sqrt{(\mu \eta + s + \lambda)^2 - 4 \mu s \eta}
\right\},
\quad K \coloneqq a + (n-1)Q
\end{equation*}
Recall the following result regarding the differentiation of the Laplace transform of $T_n$:
\begin{align*}
\frac{d}{ds} E[e^{-sT_n}] 
&= -K  \exp(-K\Phi(s))  \frac{d \Phi}{ds} (s).
\end{align*}
\begin{align*}
\frac{d^2}{ds^2} E[e^{-sT_n}]
= -K \exp(-K\Phi(s))  \left(-K \left( \frac{d \Phi}{ds} (s) \right)^2+   \frac{d^2 \Phi}{ds^2} (s) \right) .
\end{align*}
\noindent Compute the first and second derivatives of $\Phi$.
\begin{align*}
\frac{d \Phi}{ds} (s) 
&= \frac{1}{2 \mu } 
\left\{ 
 1 - \frac{2(\mu \eta + s + \lambda) - 4 \mu \eta}{2 \sqrt{(\mu \eta + s + \lambda)^2 - 4 \mu s \eta} }
\right\} \\
&= \frac{1}{2 \mu } 
\left\{ 
 1 - \frac{2s + 2\lambda - 2 \mu \eta}{2 \sqrt{(\mu \eta + s + \lambda)^2 - 4 \mu s \eta} }
\right\} \\
&= \frac{1}{2 \mu } 
\left\{ 
 1 - \frac{s + \lambda - \mu \eta}{\sqrt{(\mu \eta + s + \lambda)^2 - 4 \mu s \eta} }
\right\}.
\end{align*}

\begin{align*}
\frac{d^2 \Phi}{ds^2}(s) 
&= - \frac{1}{2 \mu } 
\times
\frac{\left\{
\sqrt{(\mu \eta + s + \lambda)^2 - 4 \mu s \eta} - (s+ \lambda - \mu \eta )\frac{(s+ \lambda - \mu \eta )}{\sqrt{(\mu \eta + s + \lambda)^2 - 4 \mu s \eta} }
\right\}}
{\left\{(\mu \eta + s + \lambda)^2 - 4 \mu s \eta \right\}^{\frac{3}{2}} }\\
&= - \frac{1}{2 \mu } 
\times
\frac{\left\{
(\mu \eta + s + \lambda)^2 - 4 \mu s \eta - (s+ \lambda - \mu \eta )^2
\right\}}
{\left\{(\mu \eta + s + \lambda)^2 - 4 \mu s \eta \right\}^{\frac{3}{2}} }\\
&= - \frac{1}{2 \mu } 
\times
\frac{\left\{
4\mu \eta( s + \lambda) - 4 \mu s \eta 
\right\}}
{\left\{(\mu \eta + s + \lambda)^2 - 4 \mu s \eta \right\}^{\frac{3}{2}} }\\
&= - \frac{
2\eta( s + \lambda) - 2 \mu s \eta 
}
{\left\{(\mu \eta + s + \lambda)^2 - 4 \mu s \eta \right\}^{\frac{3}{2}} }.
\end{align*}

\noindent Moreover,We verify the value at 0.
\begin{equation*}
\Phi(0) = 0 .
\end{equation*}

\begin{align*}
\frac{d \Phi}{ds} (0)  &= \frac{1}{2 \mu } 
\left\{ 
 1 - \frac{ \lambda - \mu \eta}{\mu \eta  + \lambda} 
\right\} \\
&= \frac{1}{2 \mu } 
\left\{ 
 \frac{ 2\mu \eta}{\mu \eta  + \lambda} 
\right\} = \frac{\eta}{\mu \eta  + \lambda}.
\end{align*}

\begin{align*}
\frac{d^2 \Phi}{ds^2}(0) 
= - \frac{2\eta \lambda }{\left\{(\mu \eta  + \lambda)^2 \right\}^{\frac{3}{2}} }
= - \frac{2\eta \lambda }{(\mu \eta  + \lambda)^3 }.
\end{align*}

\noindent Therefore, the expectation is given by
\begin{align*}
E[T_n]
= - \left. \frac{d}{ds} E[e^{-sT_n}] \right| _{s=0}
= K  \exp(-K\Phi(0))  \frac{d \Phi}{ds} (0)
=  \frac{\eta(a+(n-1)Q)}{\mu \eta  + \lambda}.
\end{align*}
Regarding the second moment, we have
\begin{align*}
E[T_n ^2]
&=\left.\frac{d^2}{ds^2} E[e^{-sT_n}] \right|_{s=0} \\
&= -K \exp(-K\Phi(0))  \left(-K \left( \frac{d \Phi}{ds} (0) \right)^2+   \frac{d^2 \Phi}{ds^2} (0) \right) \\
&= (a+(n-1)Q)^2 \left( \frac{\eta}{\mu \eta  + \lambda} \right)^2
+(a+(n-1)Q)\frac{2\eta \lambda }{(\mu \eta  + \lambda)^3} \\
&=  \frac{\eta^2(a+(n-1)Q)^2}{(\mu \eta  + \lambda)^2 } 
+\frac{2\eta \lambda (a+(n-1)Q)}{(\mu \eta  + \lambda)^3} .
\end{align*}
As for the variance, we obtain
\begin{align*}
\Var[T_n] 
= E[T_n^2] - E[T_n]^2 
= \frac{2\eta \lambda (a+(n-1)Q)}{(\mu \eta  + \lambda)^3}.
\end{align*}
\end{proof}

\subsection*{Case where the Jump Sizes Follow a Gamma Distribution}
\quad \\
\noindent As a generalization of Theorem~\ref{maintheo2},
we consider a compound Poisson process whose jump sizes follow a Gamma distribution.
In this case, the Laplace transform cannot be expressed in closed form.
However, since its derivatives at the origin can be computed explicitly,
the expectation and variance can still be obtained.
\begin{theo} \label{maintheo3}
Let $\Dt$ be defined as in (\ref{driftedcompound}), and assume that $J_k$ are independent and identically distributed and follow a Gamma distribution with parameters $\beta$ and $\eta$.  
Then,
\begin{align*}
E[T_n] &= \frac{\eta(a+(n-1)Q)}{\mu \eta + \beta \lambda }, \\
\mathrm{Var}[T_n] &= \frac{\beta(\beta+1)\eta \lambda (a+(n-1)Q)}{(\mu \eta  + \beta \lambda)^3}.
\end{align*}
\end{theo}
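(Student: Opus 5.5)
Since $\Phi$ cannot be written in closed form here, I will not invert $\psi$ explicitly. Instead I will compute $\Phi'(0)$ and $\Phi''(0)$ by implicit differentiation of $\psi(\Phi(s))=s$, and then reuse the differentiation rules for $\exp(-K\Phi(s))$ from Corollary~\ref{cor1}, with $K=a+(n-1)Q$.

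For Gamma jumps with shape $\beta$ and rate $\eta$ we have $L_{J_1}(\theta)=\left(\frac{\eta}{\eta-\theta}\right)^\beta$ for $\theta<\eta$. By \eqref{laplace_exponent},
\[
\psi(\theta)=\mu\theta+\lambda\left(\left(\tfrac{\eta}{\eta-\theta}\right)^\beta-1\right).
\]
This function is smooth and strictly increasing on $[0,\eta)$, with $\psi(0)=0$ and $\psi(\theta)\to\infty$ as $\theta\uparrow\eta$. So $\Phi=\psi^{-1}$ is well defined and smooth near $s=0$, with $\Phi(0)=0$; this is the only care needed to justify the implicit differentiation. The derivatives at the origin are
\[
\psi'(0)=\mu+\frac{\beta\lambda}{\eta}=\frac{\mu\eta+\beta\lambda}{\eta},\qquad
\psi''(0)=\lambda\frac{\beta(\beta+1)}{\eta^2},
\]
which are just the mean and second moment of $J_1$ weighted by $\lambda$.

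Differentiating $\psi(\Phi(s))=s$ once gives $\psi'(\Phi)\Phi'=1$, so
\[
\Phi'(0)=\frac{\eta}{\mu\eta+\beta\lambda}.
\]
Differentiating again gives $\psi''(\Phi)(\Phi')^2+\psi'(\Phi)\Phi''=0$, so
\[
\Phi''(0)=-\frac{\psi''(0)}{\psi'(0)^3}=-\frac{\beta(\beta+1)\lambda\eta}{(\mu\eta+\beta\lambda)^3}.
\]

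Next I plug these into the formulas from the proof of Corollary~\ref{cor1}, $E[T_n]=K\Phi'(0)$ and $E[T_n^2]=K^2\Phi'(0)^2-K\Phi''(0)$. This gives $\operatorname{Var}[T_n]=-K\Phi''(0)$, and both claimed formulas follow at once. As a sanity check, $\beta=1$ recovers the exponential case of Theorem~\ref{maintheo2}.

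There is no real obstacle here. The step needing the most care is justifying that derivatives of the Laplace transform at $s=0^+$ give the moments. This follows from monotone convergence, since $\Phi$ is smooth on a neighbourhood of $0$, so the transform extends analytically and the moments are finite.
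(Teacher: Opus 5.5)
Your implicit differentiation is correct and is exactly the paper's computation. Both arguments obtain $\Phi'(0)=\eta/(\mu\eta+\beta\lambda)$ and $\Phi''(0)=-\psi''(0)/\psi'(0)^3$, then use $E[T_n]=K\Phi'(0)$ and $E[T_n^2]=K^2\Phi'(0)^2-K\Phi''(0)$ from Corollary~\ref{cor1}.

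The gap is the identity behind those two formulas: $E[e^{-sT_n}]=\exp(-K\Phi(s))$, with $\Phi$ the inverse of $\psi(\theta)=\log E[e^{\theta D_1}]$. That is the first-passage identity for L\'evy processes \emph{without positive jumps}. It comes from optional stopping of the martingale $\exp(\theta D_t-\psi(\theta)t)$, and it needs $D_{T_n}=K$ exactly, i.e.\ the level must be reached continuously. Your $D$ jumps upward. Stopping at the bounded time $T_n\le K/\mu$ with $\theta=\Phi(s)$ yields only
$E\bigl[e^{\Phi(s)(D_{T_n}-K)}e^{-sT_n}\bigr]=e^{-K\Phi(s)}$.
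The overshoot $D_{T_n}-K\ge 0$ is strictly positive with positive probability, so $E[e^{-sT_n}]<e^{-K\Phi(s)}$ for $s>0$. The unnumbered theorem the paper cites from Bertoin and Sato is misapplied to subordinators in exactly this way, so the paper's proof has the same defect. Your $\beta=1$ check cannot detect it, because Theorem~\ref{maintheo2} rests on the same identity.

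In fact the statement is false. Let $m=\mu+\beta\lambda/\eta$. Then $D_t-mt$ is a martingale, and optional stopping at the bounded time $T_n$ gives $m\,E[T_n]=E[D_{T_n}]$. Consider the positive-probability event that the first jump occurs before time $K/\mu$ and has size larger than $K$; on it, $D_{T_n}>K$. Hence $E[T_n]>K/m=\eta K/(\mu\eta+\beta\lambda)$.

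Concretely, take $\mu=\lambda=\eta=\beta=K=1$. Then $T_n\in[0,1]$ and $T_n\ge\min(1,E_1)$, where $E_1$ is the first jump time. So $E[T_n]\ge 1-e^{-1}>1/2$, while the statement claims $1/2$. For the same reason, $\Var[T_n]\le E[T_n](1-E[T_n])\le e^{-1}(1-e^{-1})<1/4$, while the statement claims $1/4$.

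Correct moments have to include the overshoot. You can get them from $E[T_n]=\int_0^\infty P(D_t<K)\,dt$ and $E[T_n^2]=2\int_0^\infty t\,P(D_t<K)\,dt$. Alternatively, use the double transform $\int_0^\infty re^{-rx}E[e^{-sT_x}]\,dx=\kappa(r)/(s+\kappa(r))$, where $T_x=\inf\{t:D_t\ge x\}$ and $\kappa(r)=-\log E[e^{-rD_1}]$. Inverting $\psi$ will not give them.
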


\begin{proof}
\noindent Let $L_{J_1}= \left(\frac{\eta}{\eta - \theta}\right)^\beta$. Then, for $\theta< \eta$, the Laplace Exponent $\Psi$ can be calculated as follows:
\begin{equation*}
\psi(\theta) = \theta \mu +\lambda\left(\left(\frac{\eta}{\eta - \theta}\right)^\beta -1\right).
\end{equation*}

\begin{equation*}
\frac{d\psi}{d\theta}(\theta) =  \mu + \lambda \beta \eta^{\beta}(\eta - \theta)^{-(\beta+1)}.
\end{equation*}

\begin{equation*}
\frac{d^2\psi}{d\theta^2}(\theta) = \lambda \beta(\beta +1) \eta^{\beta}(\eta - \theta)^{-(\beta+2)}.
\end{equation*}

\noindent Let $\psi^{-1} = \Phi$ denote the inverse function of $\psi$. Then, it holds that
\begin{equation*}
\Phi(0) = 0 \quad (\because \psi(0)=0),
\end{equation*}
\begin{equation*}
\frac{d\Phi}{ds}(0) = \frac{1}{\cfrac{d\psi}{d\theta}(0)} 
=\frac{1}{\mu + \lambda \beta \eta^{-1}}
=\frac{\eta}{\mu \eta  +  \beta \lambda },
\end{equation*}
\begin{equation*}
\frac{d^2 \Phi}{ds^2}(0) 
= -\frac{\cfrac{d^2\psi}{d\theta^2}(0)}{\left(\cfrac{d\psi}{d\theta}(0)\right)^3}
= -\frac{\lambda \beta(\beta + 1)\eta^{-2}}{(\mu + \lambda \beta \eta^{-1})^3} 
= -\frac{\beta(\beta + 1)\eta \lambda }{(\mu \eta + \beta \lambda )^3}.
\end{equation*}

\noindent Therefore, the expectation is given by
\begin{align*}
E[T_n] &= - \left. \frac{d}{ds} E[e^{-sT_n}] \right|_{s=0}=  K  \exp(-K\Phi(0))  =\frac{d \Phi}{ds} (0) \frac{\eta (a+(n-1)Q)}{\mu \eta  +  \beta \lambda }.
\end{align*}
The second moment is given by
\begin{align*}
E[T_n^2] &= \left. \frac{d^2}{ds^2} E[e^{-sT_n}] \right|_{s=0} \\
&= -K \exp(-K\Phi(0))  \left(-K \left( \frac{d \Phi}{ds} (0) \right)^2+   \frac{d^2 \Phi}{ds^2} (0) \right) \\
&= -(a+(n-1)Q)  \left(-(a+(n-1)Q) \left( \frac{\eta}{\mu \eta  +  \beta \lambda } \right)^2   -\frac{\beta(\beta + 1)\eta \lambda }{(\mu \eta + \beta \lambda )^3}\right) \\
&=  \left( \frac{\eta(a+(n-1)Q) }{\mu \eta  +  \beta \lambda } \right)^2+ 
\frac{\beta(\beta + 1)\eta \lambda(a+(n-1)Q) }{(\mu \eta + \beta \lambda )^3}.
\end{align*}
The variance is given by
\begin{align*}
\Var[T_n] = E[T_n^2] - E[T_n]^2 
= \frac{\beta(\beta + 1)\eta \lambda(a+(n-1)Q) }{(\mu \eta + \beta \lambda )^3}.
\end{align*}
\end{proof}

\subsection{Generalization of the Cumulative Demand}
\quad \\
\noindent Based on the previous discussion, we further generalize the model.

\begin{dfn}[\textbf{Generalized Cumulative Demand}]
Let $\mu , \alpha , \lambda ,\lambda'  > 0$.  
Let $\Nt$ be a $\lambda$-Poisson process,  
$\{N'_t\}_{t \ge 0 }$ be a $\lambda'$-Poisson process,  
and $\{J_k\}_{k \ge 1}$ be a sequence of independent and identically distributed random variables.  
Assume that $\Nt$, $\{N'_t\}_{t \ge 0 }$, and $\{J_k\}_{k \ge 1}$ are mutually independent.  
We define the cumulative demand process $\Dt$ as
\begin{equation}
D_t = \mu t + \alpha N_t + \sum_{k=1}^{N'_t} J_k. 
\label{generaldemand}
\end{equation}
\end{dfn}
\noindent In this case, for 
\[
T_n = \inf \left\{s > T_{n-1} \,\middle|\, D_s \ge a + (n-1)Q \right\},
\]
we derive $E[e^{-sT_n}]$, $E[T_n]$, and $\Var[T_n]$.
Let $\psi(\theta)$ denote the Laplace exponent of $D_t$. Then,
\begin{equation*}
E[e^{-sT_n}] = \exp(-(a+(n-1)Q)\Phi(s)) 
\quad \text{,where} \ \psi(\Phi(s)) = s.
\end{equation*}

\noindent Let $L_{J_1}(\theta)$ denote the Laplace transform of $J_1$.  
We compute the Laplace exponent $\psi(\theta)$:
\begin{equation*}
\psi(\theta) = \log E[e^{\theta D_1}]
= \theta \mu  +\lambda  (e^{\theta \alpha}-1)  +\lambda'(L_{J_1}(\theta)-1).
\end{equation*}

\begin{theo} \label{maintheo4}
Let $\Dt$ be defined as in (\ref{generaldemand}), and assume that $J_k$ are independent and identically distributed and follow a Gamma distribution with parameters $\beta,\eta$. Then,
\begin{equation}
E[T_n]
= \frac{\eta(a+(n-1)Q)}{\mu \eta +\alpha \lambda + \beta \lambda' },
\end{equation}
\begin{equation}
\Var[T_n] 
= \frac{(\alpha^2 \eta^3 \lambda  + \beta(\beta+1)\eta \lambda') (a+(n-1)Q)}{(\mu \eta  +\alpha \eta \lambda+ \beta \lambda')^3}.
\end{equation}
\end{theo}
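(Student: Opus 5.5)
The plan is to follow the same route as in the proof of Theorem~\ref{maintheo3}. We do not look for a closed form of $\Phi=\psi^{-1}$ (here a Lambert-type term and a Gamma term are mixed, so none is expected). Instead we compute only the derivatives of $\Phi$ at the origin through the inverse function theorem.

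\textbf{Step 1 (Laplace exponent).} With $L_{J_1}(\theta)=\left(\frac{\eta}{\eta-\theta}\right)^{\beta}$ for $\theta<\eta$, the exponent is
\[
\psi(\theta)=\theta\mu+\lambda(e^{\theta\alpha}-1)+\lambda'\left(\left(\tfrac{\eta}{\eta-\theta}\right)^{\beta}-1\right).
\]
This rests on the independence of $\{N_t\}$, $\{N'_t\}$ and $\{J_k\}$: the moment generating function factorizes, so the two jump contributions add, exactly as in the computation leading to (\ref{laplace_exponent}).

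\textbf{Step 2 (derivatives of $\psi$).} Differentiating term by term gives
\[
\psi'(0)=\mu+\alpha\lambda+\frac{\beta\lambda'}{\eta}
=\frac{\mu\eta+\alpha\eta\lambda+\beta\lambda'}{\eta},
\qquad
\psi''(0)=\alpha^2\lambda+\frac{\beta(\beta+1)\lambda'}{\eta^2}.
\]
Since $\psi$ is smooth and strictly increasing near $0$ with $\psi(0)=0$, the inverse $\Phi$ satisfies
\[
\Phi(0)=0,\qquad \Phi'(0)=\frac{1}{\psi'(0)},\qquad \Phi''(0)=-\frac{\psi''(0)}{\psi'(0)^3}.
\]

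\textbf{Step 3 (moments).} Put $K=a+(n-1)Q$ and use the formulas for the first and second derivatives of $\exp(-K\Phi(s))$ from the proof of Corollary~\ref{cor1}. This yields
\[
E[T_n]=K\Phi'(0),\qquad E[T_n^2]=K^2\Phi'(0)^2-K\Phi''(0),
\qquad\text{hence}\qquad \Var[T_n]=K\frac{\psi''(0)}{\psi'(0)^3}.
\]
Substituting the values from Step 2 and multiplying numerator and denominator by $\eta^3$ gives the variance in the stated form,
\[
\frac{(\alpha^2\eta^3\lambda+\beta(\beta+1)\eta\lambda')K}{(\mu\eta+\alpha\eta\lambda+\beta\lambda')^3}.
\]

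\textbf{Expected obstacle.} The only delicate point is bookkeeping of the factor $\eta$ when clearing denominators. The same substitution gives $E[T_n]=\eta K/(\mu\eta+\alpha\eta\lambda+\beta\lambda')$, whose denominator contains $\alpha\eta\lambda$ rather than $\alpha\lambda$ as printed in the statement. I would therefore present the mean with the $\alpha\eta\lambda$ term, which is consistent with the denominator in the variance formula. As checks, setting $\lambda'=0$ recovers Corollary~\ref{cor1}, and setting $\lambda=0$ recovers Theorem~\ref{maintheo3}.
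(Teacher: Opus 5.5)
Your computation reproduces the paper's proof almost verbatim: you take the inverse-function derivatives $\Phi'(0)=1/\psi'(0)$ and $\Phi''(0)=-\psi''(0)/\psi'(0)^3$ and then differentiate $\exp(-K\Phi(s))$ at $0$. Your bookkeeping remark is also right: the paper's own proof finds $\Phi'(0)=\eta/(\mu\eta+\alpha\lambda\eta+\beta\lambda')$, so the missing $\eta$ in the printed mean is a typo.

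The genuine gap is upstream, in the identity $E[e^{-sT_n}]=\exp(-K\Phi(s))$ with $\Phi=\psi^{-1}$ and $\psi(\theta)=\log E[e^{\theta D_1}]$. You take this from the paper's unnumbered ``well-known'' theorem. That formula is the first-passage result for \emph{spectrally negative} L\'evy processes, which cross upward levels continuously. It does not hold for a subordinator with jumps. Its proof is optional stopping of $\exp(\theta D_t-\psi(\theta)t)$ at $T_n$, which gives $E[\exp(\theta D_{T_n}-\psi(\theta)T_n)]=1$. This reduces to $E[e^{-\psi(\theta)T_n}]=e^{-\theta K}$ only if $D_{T_n}=K$ almost surely. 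Here $D$ overshoots $K$ with positive probability, for example when the first jump is a jump of $N$ at a time in $\bigl((K-\alpha)^+/\mu,\,K/\mu\bigr)$.

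Consequently the statement is false as an exact identity, even with your corrected denominator. Since $T_n\le K/\mu$ is bounded, Wald's identity applies and gives $\psi'(0)E[T_n]=E[D_{T_n}]=K+E[D_{T_n}-K]>K$.

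More elementarily, $D_t=\mu t$ before the first jump, so $\frac{K}{\mu}e^{-(\lambda+\lambda')K/\mu}\le E[T_n]\le \frac{K}{\mu}$. Hence $E[T_n]/K\to 1/\mu$ as $K\downarrow 0$, not $1/\psi'(0)$. Likewise $\Var[T_n]\le (K/\mu)^2\bigl(1-e^{-(\lambda+\lambda')K/\mu}\bigr)=O(K^3)$, which is incompatible with a variance linear in $K$.

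Your own sanity check exposes the problem. For $\lambda'=0$, $\mu=\alpha=\lambda=1$, $K=1/2$, one has $T_n=\min(1/2,\tau_1)$ with $\tau_1$ the first jump time. So $E[T_n]=1-e^{-1/2}\approx 0.39$, not $1/4$.

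What your computation does produce correctly is the large-$K$ behaviour. $K/\psi'(0)$ and $K\psi''(0)/\psi'(0)^3$ are the leading terms of the mean and variance as $K\to\infty$, by renewal theory for first-passage times. An exact result has to handle the overshoot. One route is the renewal function $E[T_n]=\int_0^\infty P(D_t<K)\,dt$, whose Laplace transform in $K$ is $1/\bigl(q\,\phi(q)\bigr)$ with $\phi(q)=-\log E[e^{-qD_1}]$.
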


\begin{proof}
\noindent Since $L_{J_1}= \left(\frac{\eta}{\eta - \theta}\right)^\beta$, for $\theta< \eta$, the Laplace Exponent $\Psi$ can be calculated as follows:
\begin{equation*}
\psi(\theta) 
= \log E[e^{\theta D_1}]
= \theta \mu  +\lambda  (e^{\theta \alpha}-1)  
+\lambda'\left(\left(\frac{\eta}{\eta - \theta}\right)^\beta-1\right),
\end{equation*}

\begin{equation*}
\frac{d\psi}{d\theta}(\theta) 
=  \mu + \alpha \lambda e^{\alpha \theta}+ \lambda' \beta \eta^{\beta}(\eta - \theta)^{-(\beta+1)},
\end{equation*}

\begin{equation*}
\frac{d^2\psi}{d\theta^2}(\theta) 
=  \alpha^2 \lambda e^{\alpha \theta}+ \lambda' \beta(\beta +1) \eta^{\beta}(\eta - \theta)^{-(\beta+2)}.
\end{equation*}

\noindent Let $\Phi$ denote the inverse function of $\psi$. Then, it holds that
\begin{equation*}
\Phi(0) = 0 \quad (\because \psi(0)=0),
\end{equation*}

\begin{equation*}
\frac{d\Phi}{ds}(0) 
= \frac{1}{\cfrac{d\psi}{d\theta}(0)} 
=\frac{1}{\mu + \alpha \lambda+ \lambda' \beta \eta^{-1}}
=\frac{\eta}{\mu \eta  + \alpha \lambda \eta+  \beta \lambda' },
\end{equation*}

\begin{equation*}
\frac{d^2 \Phi}{ds^2}(0) 
= -\frac{\cfrac{d^2\psi}{d\theta^2}(0)}{\left(\cfrac{d\psi}{d\theta}(0)\right)^3}
= -\frac{\alpha^2\lambda + \lambda' \beta(\beta + 1)\eta^{-2}}
{(\mu + \alpha\lambda +\lambda' \beta \eta^{-1})^3} 
= -\frac{\alpha^2\eta^3 \lambda +\beta(\beta + 1)\eta \lambda' }
{(\mu \eta  + \alpha\lambda\eta+ \beta \lambda')^3}.
\end{equation*}

\noindent Therefore, we have
\begin{align*}
E[T_n] 
&= - \left. \frac{d}{ds} E[e^{-sT_n}] \right|_{s=0}=  K  \exp(-K\Phi(0))  \frac{d \Phi}{ds} (0) = \frac{\eta(a+(n-1)Q)}{\mu \eta +\alpha \lambda + \beta \lambda' } ,
\end{align*}

\begin{align*}
E[T_n^2] 
&= \left. \frac{d^2}{ds^2} E[e^{-sT_n}] \right|_{s=0} \\
&= -K \exp(-K\Phi(0))  
\left(-K \left( \frac{d \Phi}{ds} (0) \right)^2+   \frac{d^2 \Phi}{ds^2} (0) \right) \\
&= -(a+(n-1)Q)  
\left(-(a+(n-1)Q) 
\left( \frac{\eta}{\mu \eta +\alpha \lambda + \beta \lambda' } \right)^2   \right. \\
&\quad -\left.
\frac{(\alpha^2 \eta^3 \lambda  + \beta(\beta+1)\eta \lambda') }
{(\mu \eta  +\alpha \eta \lambda+ \beta \lambda')^3}
\right) \\
&=  \left( \frac{\eta(a+(n-1)Q)}
{\mu \eta +\alpha \lambda + \beta \lambda' } \right)^2
+ \frac{(\alpha^2 \eta^3 \lambda  + \beta(\beta+1)\eta \lambda') 
(a+(n-1)Q)}
{(\mu \eta  +\alpha \eta \lambda+ \beta \lambda')^3}.
\end{align*}
For the variance, we obtain
\begin{align*}
\Var[T_n] 
= E[T_n^2] - E[T_n]^2 
= 
\frac{(\alpha^2 \eta^3 \lambda  + \beta(\beta+1)\eta \lambda') 
(a+(n-1)Q)}
{(\mu \eta  +\alpha \eta \lambda+ \beta \lambda')^3}.
\end{align*}

\end{proof}

\subsection{On the Expected Total Cost}
\quad \\
\noindent Although we do not directly use the distribution of $T_n$, we consider the generalized cumulative demand process and investigate the expected value of the total inventory cost derived from it.
Consider the cumulative demand process
\[
D_t=\mu t+\alpha N_t+\sum_{k=1}^{N'_t}J_k.
\]
Here, $\{N_t\}_{t\ge0}$ is a $\lambda$-Poisson process,
$\{N'_t\}_{t\ge0}$ is a $\lambda'$-Poisson process,
and $\{J_k\}_{k\ge1}$ is a sequence of nonnegative independent and identically distributed random variables.
We assume that these processes are mutually independent.

\begin{lem}
For any $s>0$ and $b>0$,
\begin{equation}
P(T_n<s)=P(D_s\ge b)
=
\sum_{i=0}^{\infty}\sum_{j=0}^{\infty}
p_i(s)\,q_j(s)\,
P\!\left(
\sum_{k=1}^{j}J_k \ge b-\mu s-\alpha i
\right)
\label{distTn}
\end{equation}
holds, where
\[
p_i(s)=e^{-\lambda s}\frac{(\lambda s)^i}{i!},
\qquad
q_j(s)=e^{-\lambda' s}\frac{(\lambda' s)^j}{j!}.
\]
\end{lem}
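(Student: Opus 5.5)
The plan is to prove the two equalities in the statement separately. Throughout, $b$ plays the role of the threshold $a+(n-1)Q$, and $T_n$ is the first passage time above level $b$.

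\textbf{First equality.} I will establish $\{T_n<s\}=\{D_s\ge b\}$ up to a null set, using only that $D$ is a subordinator with strictly positive drift $\mu>0$ and right-continuous paths.

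If $D_s\ge b$, then $s$ belongs to the set $\{u>0 \mid D_u\ge b\}$, so $T_n\le s$. The case $T_n=s$ has to be excluded. Since $\mu>0$, the paths are strictly increasing. Hence for any $u<s$ we have $D_u\le D_s-\mu(s-u)$.

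Conversely, if $T_n<s$, right-continuity gives $D_{T_n}\ge b$. Strict increase then gives $D_s>D_{T_n}\ge b$.

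\textbf{The event $T_n=s$.} This is the delicate point. It can only occur if the level $b$ is reached exactly at time $s$. That happens either by creeping through $b$ at time $s$, or by a jump occurring exactly at time $s$.
\begin{itemize}
\item A jump at the fixed time $s$ has probability zero, since Poisson processes have no fixed discontinuities.
\item Creeping exactly at time $s$ means $D_s=b$ with no jump at $s$. Given the counts $N_s=i$, $N'_s=j$ and the jump sizes, this forces $\mu s=b-\alpha i-\sum_{k\le j}J_k$.
\end{itemize}
The creeping case is not automatically null, since $\{D_s=b\}$ can have positive probability, for instance when $j=0$ and $\mu s+\alpha i=b$. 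I will therefore argue directly on the events, as follows.

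If $D_s=b$ and there is no jump at $s$, then $D_u<b$ for all $u<s$ by strict increase. So $T_n=s$, and $s$ is not $<s$. This is a genuine discrepancy between $P(T_n<s)$ and $P(D_s\ge b)$, of size $P(D_s=b)$.

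The identity therefore holds for all $s$ outside a countable set of exceptional values. Alternatively it holds exactly with $T_n\le s$, because $\{T_n\le s\}=\{D_s\ge b\}$ by right-continuity and monotonicity. I would state the lemma in that corrected form, or note that the cost formulas only use $P(T_n<s)$ inside Lebesgue integrals, where the discrepancy is irrelevant. This is the main obstacle; the rest is routine.

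\textbf{Second equality.} Condition on $(N_s,N'_s)=(i,j)$. By mutual independence,
\[
P(D_s\ge b)=\sum_{i,j\ge0}P(N_s=i)\,P(N'_s=j)\,P\Bigl(\mu s+\alpha i+\sum_{k=1}^{j}J_k\ge b\Bigr).
\]
This uses that $\{J_k\}$ is independent of $N'$, so the conditional law of $\sum_{k=1}^{N'_s}J_k$ given $N'_s=j$ is that of $\sum_{k=1}^{j}J_k$. Since $N_s$ and $N'_s$ are Poisson with means $\lambda s$ and $\lambda' s$, we have $P(N_s=i)=p_i(s)$ and $P(N'_s=j)=q_j(s)$. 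Rearranging the inequality gives \eqref{distTn}. The double series converges absolutely, being bounded by $\sum_{i,j\ge0}p_i(s)q_j(s)=1$, so the interchange is justified by countable additivity.
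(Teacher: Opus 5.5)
Your second equality (conditioning on $(N_s,N'_s)=(i,j)$ and using independence of $\{J_k\}$ from $N'$) is the same argument as the paper's. For the first equality you part ways with the paper, and you are right to. The paper only asserts $\{T_n<s\}\iff\{D_s\ge b\}$ ``since $D$ is nondecreasing.'' Monotonicity plus right-continuity give only $\{T_n\le s\}=\{D_s\ge b\}$.

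Your objection is genuine. The simplest instance is $s=b/\mu$: on $\{N_s=0,\ N'_s=0\}$ one has $D_u=\mu u<b$ for $u<s$ and $D_s=b$, so $T_n=s$ and
\[
P(T_n<s)\le P(D_s\ge b)-e^{-(\lambda+\lambda')s}<P(D_s\ge b).
\]
So the statement itself is false at such $s$, not merely its proof, and your corrected version is the right thing to prove.

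A clean way to package the correction:
\begin{itemize}
\item $P(T_n\le s)=P(D_s\ge b)$ for every $s>0$.
\item Almost surely no jump occurs at the fixed time $s$, so $D_{s-}=D_s$ and $\{T_n<s\}=\{D_s>b\}$ a.s. Hence $P(T_n<s)=P(D_s>b)$ for every $s>0$, with $\sum_{k\le j}J_k>b-\mu s-\alpha i$ in the series.
\end{itemize}
The two versions differ by $P(D_s=b)$. This is positive only if $b-\mu s-\alpha i$ is an atom of $\sum_{k\le j}J_k$ for some $i,j$. That is the one-line reason your exceptional set of $s$ is countable, which you asserted without justification. For Gamma jumps the set is just $\{(b-\alpha i)/\mu : i\ge 0 \text{ an integer},\ \alpha i<b\}$.

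Two minor points.
\begin{itemize}
\item Your first paragraph contains the sentences ``The case $T_n=s$ has to be excluded \ldots\ $D_u\le D_s-\mu(s-u)$.'' They announce an exclusion that you later show cannot be made, so remove them.
\item Your fallback remark, that the discrepancy is harmless because $P(T_n<s)$ only enters through Lebesgue integrals, is not accurate. The ordering-cost term $C_oQ\sum_{n}P(T_n<t)$ is evaluated at the fixed horizon $t$. The paper's total-cost formula therefore inherits the error at the countably many exceptional $t$. To fix it, replace $\ge$ by $>$ there, or redefine $R_t$ using $\mathbf{1}_{\{T_n\le t\}}$.
\end{itemize}
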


\begin{proof}
\noindent Since the cumulative demand process $D_t$ is nondecreasing,
$\{T_n<s\}\iff\{D_s\ge b\}$ holds,
and thus $P(T_n<s)=P(D_s\ge b)$.

\[
P(D_s\ge b)
=
\sum_{i=0}^{\infty}\sum_{j=0}^{\infty}
P(D_s\ge b \mid N_s=i,N'_s=j)\,
P(N_s=i,N'_s=j).
\]

\noindent Because the Poisson processes $N_s$ and $N'_s$ are independent,
\[
P(N_s=i,N'_s=j)
=
P(N_s=i)P(N'_s=j)
=
p_i(s)\,q_j(s).
\]

\noindent Under the condition $(N_s,N'_s)=(i,j)$,
\[
D_s=\mu s+\alpha i+\sum_{k=1}^{j}J_k
\quad \text{a.s.}
\]
\noindent Therefore,
\[
P(D_s\ge b \mid N_s=i,N'_s=j)
=
P\!\left(
\sum_{k=1}^{j}J_k \ge b-\mu s-\alpha i
\right).
\]
\end{proof}

\begin{lem}
Let $\{J_k\}_{k\ge1}$ be independent and identically distributed random variables following a Gamma distribution with parameters $\beta,\eta>0$.
Then for any integer $j\ge1$ and $x\in \mathbb{R}$,
\begin{equation}
P\!\left(\sum_{k=1}^{j}J_k \ge x\right)
=
\begin{cases}
1, & x\le0,\\[6pt]
\dfrac{\Gamma(j\beta,\eta x)}{\Gamma(j\beta)}, & x>0,
\end{cases}
\end{equation}
where $\Gamma(\cdot,\cdot)$ denotes the upper incomplete gamma function.

\noindent In particular, when $\beta =1$,
\begin{equation}
P\!\left(\sum_{k=1}^{j}J_k \ge x\right)
=
e^{-\eta x}\sum_{r=0}^{j-1}\frac{(\eta x)^r}{r!}.
\end{equation}
\end{lem}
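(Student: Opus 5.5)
The plan is to identify the law of the partial sum $S_j=\sum_{k=1}^{j}J_k$ first, and then integrate its density.

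\textbf{Step 1: the law of $S_j$.} Each $J_k$ has Laplace transform $L_{J_1}(\theta)=\left(\frac{\eta}{\eta-\theta}\right)^{\beta}$ for $\theta<\eta$. This is the same expression used in the proofs of Theorems~\ref{maintheo3} and~\ref{maintheo4}, where $\eta$ is the rate parameter. By independence,
\[
E[e^{\theta S_j}]=\left(\frac{\eta}{\eta-\theta}\right)^{j\beta},
\qquad \theta<\eta .
\]
This is the Laplace transform of the Gamma$(j\beta,\eta)$ law. Uniqueness of Laplace transforms for nonnegative random variables (the transform is finite on a neighbourhood of $0$) then shows that $S_j$ has density
\[
f_j(y)=\frac{\eta^{j\beta}}{\Gamma(j\beta)}\,y^{j\beta-1}e^{-\eta y}\,\mathbf 1_{\{y>0\}}.
\]

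\textbf{Step 2: the two cases.} If $x\le 0$, then $P(S_j\ge x)=1$, because $S_j\ge 0$ almost surely. If $x>0$, integrate the density and substitute $u=\eta y$:
\[
P(S_j\ge x)=\int_x^\infty f_j(y)\,dy=\frac{1}{\Gamma(j\beta)}\int_{\eta x}^\infty u^{j\beta-1}e^{-u}\,du=\frac{\Gamma(j\beta,\eta x)}{\Gamma(j\beta)}.
\]
This is exactly the definition of the upper incomplete gamma function.

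\textbf{Step 3: the special case $\beta=1$.} Here $j\beta=j$ is an integer, so we need $\Gamma(j,z)=(j-1)!\,e^{-z}\sum_{r=0}^{j-1}z^r/r!$. I would prove this by induction on $j$. For $j=1$, $\Gamma(1,z)=e^{-z}$. For the inductive step, integration by parts gives
\[
\Gamma(j+1,z)=z^{j}e^{-z}+j\,\Gamma(j,z).
\]
Dividing by $\Gamma(j+1)=j!$ adds the term $e^{-z}z^{j}/j!$ to the sum, which closes the induction. Setting $z=\eta x$ gives the stated formula.

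An alternative route for Step 3 is the Poisson--Erlang duality: $S_j\ge x$ holds if and only if fewer than $j$ points of a rate-$\eta$ Poisson process fall in $[0,x]$. This gives the same sum directly.

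\textbf{Main obstacle.} The only subtle point is Step 1, where we must use the correct parametrization (shape $\beta$, rate $\eta$) consistently with the earlier Laplace transform. Once the law is identified as Gamma$(j\beta,\eta)$, Steps 2 and 3 are routine calculus.
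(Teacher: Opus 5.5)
Your proposal is correct and follows the paper's proof essentially step for step: you identify $\sum_{k=1}^{j}J_k$ as Gamma$(j\beta,\eta)$ from the product of Laplace transforms, integrate its density with the substitution $u=\eta y$, and for $\beta=1$ use the integration-by-parts recursion $\Gamma(j+1,z)=z^{j}e^{-z}+j\,\Gamma(j,z)$, which you write as a clean induction where the paper iterates it. One small point that both you and the paper leave implicit is that the $\beta=1$ closed form holds only for $x\ge 0$: for $x<0$ the probability is $1$, while $e^{-\eta x}\sum_{r=0}^{j-1}(\eta x)^r/r!$ differs from $1$, so your final substitution $z=\eta x$ should be stated for $x>0$ (and checked directly at $x=0$).
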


\begin{proof}
If $x\le0$, since $J_k\ge0$, it is clear that
$\sum_{k=1}^jJ_k\ge x$.

\noindent Assume $x>0$.
Using the Laplace transform of the Gamma distribution
\[
E[e^{sJ_1}]
=\left(\frac{\eta}{\eta-s}\right)^{\beta}
\]
and independence, we obtain
\[
E\!\left[e^{-s\sum_{k=1}^jJ_k}\right]
=\left(\frac{\eta}{\eta-s}\right)^{j\beta}.
\]
\noindent This coincides with the Laplace transform of the Gamma distribution with parameters $j\beta ,\eta$.
Therefore,
\[
\sum_{k=1}^{j}J_k
\]
follows a Gamma distribution with parameters $j\beta ,\eta$.
Hence,
\[
P\!\left(\sum_{k=1}^{j}J_k \ge x\right)
=
\int_x^\infty
\frac{\eta^{j\beta}}{\Gamma(j\beta)}
u^{j\beta-1}e^{-\eta u}\,du.
\]
\noindent By the change of variables $y=\eta u$,
\[
P\!\left(\sum_{k=1}^{j}J_k \ge x\right)
=
\frac{1}{\Gamma(j\beta)}
\int_{\eta x}^\infty y^{j\beta-1}e^{-y}\,dy
=
\frac{\Gamma(j\beta,\eta x)}{\Gamma(j\beta)}.
\]

\noindent When $\beta=1$, $j\beta=j$ is an integer.
Consider
\[
\Gamma(j,z)=\int_z^\infty y^{j-1}e^{-y}\,dy.
\]
Let $I_{j-1}(z):=\int_z^\infty y^{j-1}e^{-y}\,dy$.

\[
I_{j-1}(z)
=
\Bigl[-y^{j-1}e^{-y}\Bigr]_z^\infty
+(j-1)\int_z^\infty y^{j-2}e^{-y}\,dy.
\]

\noindent Since $\lim_{y\to\infty}y^{j-1}e^{-y}=0$,
\[
I_{j-1}(z)
=
z^{j-1}e^{-z}
+(j-1)I_{j-2}(z).
\]

\noindent Repeating integration by parts recursively yields
\[
I_{j-1}(z)
=
e^{-z}\Bigl(
z^{j-1}
+(j-1)z^{j-2}
+(j-1)(j-2)z^{j-3}
+\cdots
+(j-1)!
\Bigr).
\]

\noindent Rearranging coefficients gives
\[
I_{j-1}(z)
=
(j-1)!e^{-z}
\sum_{r=0}^{j-1}\frac{z^r}{r!}.
\]

\noindent Thus,
\[
\Gamma(j,z)=(j-1)!e^{-z}\sum_{r=0}^{j-1}\frac{z^r}{r!}.
\]

\noindent Therefore,
\[
\Gamma(j,\eta x)
=
(j-1)!e^{-\eta x}\sum_{r=0}^{j-1}\frac{(\eta x)^r}{r!}.
\]
\end{proof}

\begin{rem}
The same result can also be obtained by performing a direct computation in the case where the jump sizes $J_k$ follow an exponential distribution.
\end{rem}

\begin{theo}
Consider the cumulative demand process
\[
D_t=\mu t+\alpha N_t+\sum_{k=1}^{N'_t}J_k.
\]
Here, $\{N_t\}_{t\ge0}$ is a $\lambda$-Poisson process,
$\{N'_t\}_{t\ge0}$ is a $\lambda'$-Poisson process,
and $\{J_k\}_{k\ge1}$ is a sequence of nonnegative independent and identically distributed random variables.
Assume that these are mutually independent. 
Then, for any $s>0$ and $b>0$,
\begin{align}
E[C_{\mathrm{total}}(a,Q,t)]
&=
C_o Q
\sum_{n\ge1}P(D_t\ge a+(n-1)Q) \notag\\
&\quad
+ C_h x t
- \frac{C_h t^2}{2}
\bigl(\mu+\alpha\lambda+\lambda'E[J_1]\bigr) \notag\\
&\quad
+ C_h Q
\int_0^t
\sum_{n\ge1}P(D_s\ge a+(n-1)Q)\,ds.
\end{align}
\end{theo}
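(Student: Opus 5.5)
The plan is to start from the decomposition \eqref{totalcost1}, which already expresses the expected total cost under the negligible stockout-cost convention:
\[
E[C_{\mathrm{total}}(a,Q,t)] = C_o Q \sum_{n\ge1}P(T_n<t) + C_h x t - C_h \int_0^t E[D_s]\,ds + C_h Q \int_0^t \sum_{n\ge1}P(T_n<s)\,ds .
\]
Three substitutions then turn this into the claimed formula.

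First, I would replace every probability $P(T_n<u)$, for $u=t$ and for $u=s\in(0,t)$, by $P(D_u\ge a+(n-1)Q)$. This is the first identity of the preceding lemma, applied with $b=a+(n-1)Q$. Here I would note that it uses only monotonicity of $D$ and the definition of $T_n$ as the first passage above $a+(n-1)Q$; the distributional hypotheses on the jumps are not needed.

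Second, I would compute $E[D_s]$ directly. By linearity we have $E[\mu s]=\mu s$ and $E[\alpha N_s]=\alpha\lambda s$. For the compound Poisson part, I would condition on $N'_s$, or equivalently use Wald's identity with the independence of $N'_s$ and $\{J_k\}$, to get
\[
E\Bigl[\sum_{k=1}^{N'_s}J_k\Bigr]=\sum_{j\ge0}q_j(s)\,jE[J_1]=\lambda' s\,E[J_1].
\]
Because the $J_k$ are nonnegative, Tonelli justifies this step even if $E[J_1]=\infty$, in which case both sides are infinite. Hence $E[D_s]=(\mu+\alpha\lambda+\lambda'E[J_1])s$. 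Integrating over $[0,t]$ gives $\frac{t^2}{2}(\mu+\alpha\lambda+\lambda'E[J_1])$, which produces the middle term.

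Third, I would keep the interchange $E\bigl[\int_0^t R_s\,ds\bigr]=\int_0^t\sum_{n}P(T_n<s)\,ds$ from \eqref{eq:intER}, justified by Tonelli since the integrand is nonnegative, and substitute as in the first step inside the integral.

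The only real subtlety is the boundary mismatch between $\{T_n<s\}$ and $\{D_s\ge b\}$. Because of the drift $\mu>0$, $D$ is strictly increasing. If $D_s\ge b$ then $D_u>b$ fails only at the single point $u=s$, and on the other side the infimum in the definition of $T_n$ is attained by right-continuity. So the events agree up to the null event $\{T_n=s\}$, since the law of $T_n$ has no atoms at fixed $s>0$; the same strict monotonicity also gives that $D_s$ has no atom at $b$. Under the Lebesgue integral such discrepancies are immaterial anyway. I would also remark that the sums are finite, because $\sum_n P(D_t\ge a+(n-1)Q)\le 1+E[D_t]/Q$. Assembling the three pieces then yields the stated expression.
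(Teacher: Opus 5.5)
Your route is the paper's: substitute \eqref{distTn} into \eqref{totalcost1}, and compute $E[D_s]$ by conditioning on $N'_s$. That computation is fine. The gap is in your treatment of the boundary, which rests on a false claim.

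You correctly show that $\{T_n\le s\}=\{D_s\ge b\}$ for $b=a+(n-1)Q$, so $P(D_s\ge b)-P(T_n<s)=P(T_n=s)$. But neither $T_n$ nor $D_s$ is atomless. Strict monotonicity of the paths in time says nothing about the law of $D_s$ at a fixed time.
\begin{itemize}
\item On the event $\{N_s=i,\ N'_s=0\}$ one has $D_s=\mu s+\alpha i$ deterministically, so $D_s$ has an atom there.
\item If $\mu s+\alpha i=b$, then on this event $D_u\le\mu u+\alpha i<b$ for $u<s$ while $D_s=b$, so $T_n=s$.
\item For example, $P(T_1=a/\mu)\ge P(N_{a/\mu}=N'_{a/\mu}=0)=e^{-(\lambda+\lambda')a/\mu}>0$. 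If no jump arrives first, the drift creeps over the level at a deterministic time.
\end{itemize}

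What actually holds is $P(T_n<s)=P(D_s>b)$, using that $D$ a.s.\ has no jump at a fixed time. This differs from $P(D_s\ge b)$ exactly when $D_s$ charges $b$. That happens for $s\in\{(b-\alpha i)/\mu:\ 0\le i<b/\alpha\}$, plus countably many further values if $J_1$ has atoms.

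In the $ds$-integral this countable set is negligible, so your Lebesgue-integral remark is the real justification there, not an afterthought. In the ordering term it is not negligible. At such a horizon, e.g.\ $t=a/\mu$, $E[R_t]=\sum_n P(D_t>a+(n-1)Q)$ is strictly smaller than $\sum_n P(D_t\ge a+(n-1)Q)$. So the displayed identity holds only for $t$ off a countable set. Alternatively it holds everywhere with $\ge$ replaced by $>$ in the first term, or equivalently with $R_t$ counting $T_n\le t$.

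The paper's one-line proof has the same defect, inherited from \eqref{distTn}, which asserts $\{T_n<s\}\iff\{D_s\ge b\}$. Your argument becomes correct once the no-atom claim is replaced by this accounting.

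Separately, assume $E[J_1]<\infty$ rather than allowing it to be infinite. Otherwise splitting $E\bigl[\int_0^t(x-D_s+QR_s)\,ds\bigr]$ into separate expectations fails, your bound $1+E[D_t]/Q$ fails, and the right-hand side is of the form $\infty-\infty$.
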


\begin{proof}
The result follows from (\ref{totalcost1}) and (\ref{distTn}).
\end{proof}

\noindent If the distribution of $J_k$ is specified explicitly, we obtain the following corollary.

\begin{cor}
Let $\{J_k\}_{k\ge1}$ be independent and identically distributed random variables following a Gamma distribution with parameters $\beta,\eta>0$.
Then the expected total cost is given by
\begin{align}
&\quad E[C_{\mathrm{total}}(a,Q,t)] \notag\\
&=
C_o Q
\sum_{n\ge1}
\sum_{i=0}^{\infty}\sum_{j=0}^{\infty}
p_i(t)q_j(t)\,
P\!\left(
\sum_{k=1}^{j}J_k
\ge
a+(n-1)Q-\mu t-\alpha i
\right)\notag\\
&\quad
+ C_h x t
-
\frac{C_h t^2}{2}
\Bigl(\mu+\alpha\lambda+\lambda'\tfrac{\beta}{\eta}\Bigr) \notag\\
&\quad
+
C_h Q
\int_0^t
\sum_{n\ge1}
\sum_{i=0}^{\infty}\sum_{j=0}^{\infty}
p_i(s)q_j(s)\,
P\!\left(
\sum_{k=1}^{j}J_k
\ge
a+(n-1)Q-\mu s-\alpha i
\right)
ds,
\end{align}
where
\[
p_i(s)=e^{-\lambda s}\frac{(\lambda s)^i}{i!},
\qquad
q_j(s)=e^{-\lambda' s}\frac{(\lambda' s)^j}{j!},
\]
and
\[
P\!\left(
\sum_{k=1}^{j}J_k \ge y
\right)
=
\begin{cases}
1, & y\le0,\\[6pt]
\dfrac{\Gamma(j\beta,\eta y)}{\Gamma(j\beta)}, & y>0,
\end{cases}
\]
holds.In particular, when $\beta =1$,
\begin{align}
E[C_{\mathrm{total}}(a,Q,t)]
&=
C_o Q
\sum_{n\ge1}
\sum_{i=0}^{\infty}\sum_{j=0}^{\infty}
p_i(t)q_j(t)\,
\Phi_j(a+(n-1)Q-\mu t-\alpha i) \notag \\
&\quad
+ C_h x t
-
\frac{C_h t^2}{2}
\Bigl(\mu+\alpha\lambda+\tfrac{\lambda'}{\eta}\Bigr) \notag \\
&\quad
+
C_h Q
\int_0^t
\sum_{n\ge1}
\sum_{i=0}^{\infty}\sum_{j=0}^{\infty}
p_i(s)q_j(s)\,
\Phi_j(a+(n-1)Q-\mu s-\alpha i)
ds.
\end{align}

\noindent Here,
\[
\phi_j(y)
=
P\!\left(\sum_{k=1}^{j}J_k \ge y\right)
=
\begin{cases}
1, & y \le 0,\\[6pt]
\displaystyle
e^{-\eta y}
\sum_{r=0}^{j-1}\frac{(\eta y)^r}{r!},
& y>0.
\end{cases}
\]
\end{cor}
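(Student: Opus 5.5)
The plan is to substitute the explicit distribution of $D_s$ into the expected-cost formula of the preceding theorem, which was obtained from \eqref{totalcost1} together with \eqref{distTn}. That theorem already writes $E[C_{\mathrm{total}}(a,Q,t)]$ in terms of three ingredients:
\begin{itemize}
\item the tail probabilities $P(D_t\ge a+(n-1)Q)$;
\item the same tail probabilities at each time $s\in[0,t]$, integrated over $s$;
\item the mean term $\frac{C_h t^2}{2}(\mu+\alpha\lambda+\lambda'E[J_1])$.
\end{itemize}
The corollary is therefore a direct specialization, and the proof has three substitution steps.

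\textbf{Step 1.} Apply the lemma giving \eqref{distTn} with $b=a+(n-1)Q$, at time $t$ for the ordering-cost term and at time $s$ inside the holding-cost integral. This turns each $P(D_s\ge a+(n-1)Q)$ into the double series
\[
\sum_{i,j\ge 0} p_i(s)\,q_j(s)\,P\Bigl(\sum_{k=1}^{j}J_k\ge a+(n-1)Q-\mu s-\alpha i\Bigr).
\]

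\textbf{Step 2.} Insert the Gamma lemma. For $y\le 0$ the tail probability is $1$, and for $y>0$ it is $\Gamma(j\beta,\eta y)/\Gamma(j\beta)$. When $\beta=1$ it reduces to $e^{-\eta y}\sum_{r=0}^{j-1}(\eta y)^r/r!$; this is the function $\phi_j$ (written $\Phi_j$ in the displayed formula, which I would read as the same object). The case $j=0$ needs a small separate remark: the empty sum is $0$, so the probability is $\mathbf 1_{\{y\le 0\}}$, which is consistent with the stated case formula.

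\textbf{Step 3.} Compute the drift term. Since $E[D_s]=(\mu+\alpha\lambda+\lambda'E[J_1])s$ and $E[J_1]=\beta/\eta$ for the Gamma law (differentiate $L_{J_1}(\theta)=(\eta/(\eta-\theta))^\beta$ at $0$), we get $\int_0^t E[D_s]\,ds=\frac{t^2}{2}(\mu+\alpha\lambda+\lambda'\beta/\eta)$. Setting $\beta=1$ gives $\lambda'/\eta$.

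The only point needing care is justifying the interchange of $\int_0^t$ with the triple sum over $n,i,j$. I would handle this by Tonelli, since every term is nonnegative. Finiteness follows from $\sum_n P(D_s\ge a+(n-1)Q)\le 1+E[D_s]/Q$, which is bounded by $1+E[D_t]/Q$ on $[0,t]$ because $D$ is nondecreasing. Since everything else is already established, I expect this measurability and summability check to be the main (mild) obstacle. After it, the corollary follows by direct substitution.
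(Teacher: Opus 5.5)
Your proposal is correct and follows the paper's route. The paper states this corollary without a separate proof, as a direct substitution of \eqref{distTn} (with $b=a+(n-1)Q$), the Gamma tail lemma, and $E[J_1]=\beta/\eta$ into the preceding total-cost theorem. Your summability check via $\sum_{n}P(D_s\ge a+(n-1)Q)\le 1+E[D_t]/Q$ and your remark on the $j=0$ term (where $\Gamma(0,\eta y)/\Gamma(0)$ must be read as $0$ for $y>0$) are details the paper leaves implicit, and both are sound.
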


\begin{theo}[Comparison between Finite-Horizon and Long-Run Average]
Assume $E[J_1]<\infty$ and define
\[
m \coloneq \bigl(\mu+\alpha\lambda+\lambda' E[J_1]\bigr).
\]
Then,
\begin{equation}
\lim_{t\to\infty}\frac{1}{t} E[C_{\mathrm{total}}(a,Q,t)]
=
C_o m + C_h x,
\end{equation}
and this limit does not depend on $Q$ (nor on $a$).
\end{theo}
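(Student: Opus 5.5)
We start from the decomposition \eqref{totalcost1}, which writes $E[C_{\mathrm{total}}(a,Q,t)]$ as the sum of four terms: ordering, $C_h x t$, drift, and integrated replenishment. We then divide by $t$ and handle the terms one at a time.

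\textbf{Step 1: the drift term.} Since $E[D_s]=ms$ by the Laplace exponent computed before Theorem~\ref{maintheo4} (differentiate $\psi$ at $0$), we have $C_h\int_0^t E[D_s]\,ds=C_h m t^2/2$.

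\textbf{Step 2: the ordering term.} By \eqref{eq:ERt_sum} and the Lemma giving \eqref{distTn},
\[
E[R_t]=\sum_{n\ge1}P\bigl(D_t\ge a+(n-1)Q\bigr).
\]
Since $D_t\ge0$, this sum is squeezed between $(D_t-a)^+/Q$ and $(D_t-a)^+/Q+1$ pointwise in $\omega$ once expectations are taken. Hence $Q E[R_t]=m t+O(1)$ uniformly in $t$. This is the elementary renewal statement, obtained directly without passing through the distribution of $T_n$. Dividing by $t$, the ordering term contributes $C_o\,Q E[R_t]/t\to C_o m$.

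\textbf{Step 3: the holding terms and the main obstacle.} What remains to control is
\[
\frac{1}{t}\Bigl(C_h Q\int_0^t E[R_s]\,ds-\frac{C_h m t^2}{2}\Bigr)
=\frac{C_h}{t}\int_0^t\bigl(QE[R_s]-ms\bigr)\,ds .
\]
The statement asserts that this tends to $0$, so that only $C_h x$ survives. The crude bound $QE[R_s]-ms=O(1)$ from Step 2 only shows the expression is bounded. To obtain the claimed limit I need the sharper fact that the Cesàro mean of $e(s):=QE[R_s]-ms$ vanishes.

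I plan to try this by writing
\[
e(s)=E\Bigl[Q\Bigl\lceil \tfrac{(D_s-a)^+}{Q}\Bigr\rceil\Bigr]-E[D_s],
\]
using the identity in Remark~\eqref{eq:intR_identity} to rewrite $\int_0^t E[R_s]\,ds$ through $E[(t-T_n)^+]$, together with Corollary/Theorem~\ref{maintheo4} for $E[T_n]=K/m$, where $K=a+(n-1)Q$. This yields
\[
\sum_{n\le mt/Q}\Bigl(t-\frac{a+(n-1)Q}{m}\Bigr),
\]
which I then compare with $mt^2/(2Q)$.

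This comparison is exactly where I expect the difficulty. The arithmetic sum produces a residual of order $t$, coming from the offset $a$ and the half-step $Q/2$ of the overshoot distribution. Showing that this residual cancels is the step on which the stated $Q$- and $a$-independence of the limit rests, so I would carry out this sum carefully and check it against the $O(1)$ overshoot term before concluding.

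\textbf{Step 4: assembling the limit.} Granting Step 3, combining the contributions gives
\[
\lim_{t\to\infty}\frac1t E[C_{\mathrm{total}}(a,Q,t)]=C_o m+C_h x+0 .
\]
Independence of the limit from $Q$ and $a$ is then immediate, since neither parameter appears in $C_o m + C_h x$.
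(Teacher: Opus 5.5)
\textbf{Step 3 is not merely the hard step; it is false, so the proof cannot be completed.} Steps 1 and 2 are sound, and your sandwich via $(D_t-a)^+$ is cleaner than the paper's bound. But the Ces\`aro mean of $e(s)$ vanishes only when $a=Q/2$, so the displayed limit is correct only in that case, and the claimed independence of $a$ and $Q$ fails.

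The residual you anticipated is genuine. Use your own formula $R_s=\lceil (D_s-a)^+/Q\rceil$ a.s. On $\{D_s>a\}$, with $y_s=(D_s-a)/Q$,
\[
QR_s-D_s=-a+Q\bigl(\lceil y_s\rceil-y_s\bigr),
\]
and $\lceil y\rceil-y\in[0,1)$ depends only on $y \bmod 1$. On $\{D_s\le a\}$ the difference lies in $[-a,0]$, and $P(D_s\le a)\to0$.

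For an integer $k\neq0$ we have $E[e^{2\pi i kD_s/Q}]=e^{s\kappa_k}$, where
\[
\mathrm{Re}\,\kappa_k=\lambda\bigl(\cos(2\pi k\alpha/Q)-1\bigr)+\lambda'\bigl(\mathrm{Re}\,E[e^{2\pi i kJ_1/Q}]-1\bigr)\le0 .
\]
If $\mathrm{Re}\,\kappa_k=0$, then $\kappa_k=2\pi i k\mu/Q\neq0$ because $\mu>0$. Either way $\frac1t\int_0^t e^{s\kappa_k}\,ds\to0$. By Weyl's criterion, the time-averaged law of $y_s \bmod 1$ converges to the uniform law. Since $\lceil y\rceil-y$ is discontinuous only at the integers, $\frac1t\int_0^t e(s)\,ds\to \frac Q2-a$. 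Hence
\[
\lim_{t\to\infty}\frac1t E[C_{\mathrm{total}}(a,Q,t)]=C_o m+C_h\Bigl(x-a+\frac Q2\Bigr).
\]
This is just the time average of the sawtooth inventory level oscillating between $x-a$ and $x-a+Q$. It is already visible for pure drift $D_t=\mu t$, and it depends on both $a$ and $Q$.

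Carried out, your arithmetic sum gives $\frac{mt^2}{2Q}+\bigl(\frac12-\frac aQ\bigr)t+O(1)$, which is exactly this non-cancelling residual. Still, that route is heuristic at the relevant order, for two reasons:
\begin{itemize}
\item Replacing $E[(t-T_n)^+]$ by $(t-E[T_n])^+$ costs an amount of order $t$. Each of the $O(\sqrt t)$ indices with $|E[T_n]-t|=O(\sqrt t)$ contributes a Jensen gap of order $\sqrt t$.
\item The value $E[T_n]=K/m$ taken from Theorem~\ref{maintheo4} is not exact for a subordinator with jumps. Wald's identity gives $mE[T_n]=E[D_{T_n}]>K$ because of overshoot.
\end{itemize}
Both errors are of order $t$, the same order you are tracking. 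Here they happen to cancel, which is why your sum shows the right residual. The direct computation above is what actually settles the question.

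The paper's proof takes precisely the step you declined to assume. From the ratio statement $\frac1{t^2}\int_0^tE[R_s]\,ds\to\frac m{2Q}$, it concludes that $C_hQ\,\frac1t\int_0^tE[R_s]\,ds$ ``tends to'' $\frac{C_h}{2}mt$ and that the terms proportional to $t$ cancel. That is your Step 3 in disguise, and it drops the $O(1)$ contribution $C_h(Q/2-a)$. Your hesitation was warranted.
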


\begin{proof}
By monotonicity,
\[
\{T_n<t\}\iff \{D_t\ge a+(n-1)Q\}
\]
holds, and therefore
\[
 E[R_t]
=\sum_{n\ge1} P(T_n<t)
=\sum_{n\ge1} P(D_t\ge a+(n-1)Q).
\]

\noindent Hence, from the previously derived representation of the total cost,
\[
 E[C_{\mathrm{total}}(a,Q,t)]
=
C_o Q\, E[R_t]
+ C_h x t
- C_h\int_0^t E[D_s]\,ds
+ C_h Q\int_0^t  E[R_s]\,ds.
\]

\noindent Next, we compute $ E[D_s]$. By independence and linearity,
\[
 E[D_s]
=\mu s+\alpha\, E[N_s]+ E[N'_s] E[J_1]
=\bigl(\mu+\alpha\lambda+\lambda' E[J_1]\bigr)s =: ms.
\]
\noindent Thus,
\[
\int_0^t E[D_s]\,ds=\frac{m}{2}t^2.
\]

\noindent To establish the long-run average, we provide upper and lower bounds relating $R_t$ and $D_t$.
Since $R_t$ counts the number of thresholds reached by $D_t$, we have
\[
a+(R_t-1)Q \le D_t < a+R_t Q.
\]
Therefore,
\[
\frac{D_t-a}{Q} \le R_t \le \frac{D_t-a}{Q}+1.
\]

\noindent Taking expectations and dividing by $t$,
\[
\frac{1}{t}\frac{ E[D_t]-a}{Q}
\le
\frac{ E[R_t]}{t}
\le
\frac{1}{t}\frac{ E[D_t]-a}{Q}+\frac{1}{t}.
\]

\noindent Since $ E[D_t]=mt$, letting $t\to\infty$ yields
\[
\lim_{t\to\infty}\frac{ E[R_t]}{t}=\frac{m}{Q}.
\]

\noindent Furthermore,
\[
\frac{1}{t^2}\int_0^t E[R_s]\,ds
=\int_0^1 \frac{ E[R_{ut}]}{t}\,du
\longrightarrow
\int_0^1 \frac{m}{Q}u\,du
=\frac{m}{2Q}.
\]
\noindent Hence,
\[
\int_0^t E[R_s]\,ds \sim \frac{m}{2Q}t^2.
\]
\noindent Substituting these into the total cost expression,
\begin{align*}
\frac{1}{t} E[C_{\mathrm{total}}(a,Q,t)]
&=
C_o Q\frac{ E[R_t]}{t}
+ C_h x
-\frac{C_h}{2}mt
+ C_h Q\cdot \frac{1}{t}\int_0^t E[R_s]\,ds.
\end{align*}

\noindent We have
\[
C_o Q\frac{ E[R_t]}{t}\to C_o m,
\]
and
\[
C_h Q\cdot \frac{1}{t}\int_0^t E[R_s]\,ds
=
C_h Q\cdot t\cdot \frac{1}{t^2}\int_0^t E[R_s]\,ds
\to
C_h Q\cdot t\cdot \frac{m}{2Q}
=\frac{C_h}{2}mt.
\]

\noindent Thus, the terms proportional to $t$ cancel, and we obtain
\[
\lim_{t\to\infty}\frac{1}{t} E[C_{\mathrm{total}}(a,Q,t)]
= C_o m + C_h x
= C_o \bigl(\mu+\alpha\lambda+\lambda' E[J_1]\bigr) + C_h x.
\]

\noindent Since the limit does not contain $Q$, the long-run average cost does not depend on $Q$ (nor on $a$).
\end{proof}

\begin{cor}
If $J_k$ follows an exponential distribution with parameter $\eta$,
\begin{equation}
\lim_{t\to\infty}\frac{1}{t} E[C_{\mathrm{total}}(a,Q,t)]
= C_o \left(\mu+\alpha\lambda+\frac{\lambda'}{\eta} \right) + C_h x.
\end{equation}

\noindent If $J_k$ follows a Gamma distribution with parameters $\beta,\eta$,
\begin{equation}
\lim_{t\to\infty}\frac{1}{t} E[C_{\mathrm{total}}(a,Q,t)]
= C_o \left(\mu+\alpha\lambda+\frac{\beta\lambda'}{\eta} \right) + C_h x.
\end{equation}
\end{cor}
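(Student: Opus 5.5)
The statement is a direct specialisation of the preceding Theorem (Comparison between Finite-Horizon and Long-Run Average). I would therefore verify its hypothesis and then substitute the mean of the jump distribution into the constant $m=\mu+\alpha\lambda+\lambda' E[J_1]$. The only work is computing $E[J_1]$ in the two cases and checking that $E[J_1]<\infty$.

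\emph{Gamma case.} Take $J_1$ Gamma with shape $\beta$ and rate $\eta$, so that $E[e^{\theta J_1}]=L_{J_1}(\theta)=\left(\frac{\eta}{\eta-\theta}\right)^{\beta}$ for $\theta<\eta$, as used in the proof of Theorem~\ref{maintheo3}. Since $L_{J_1}$ is finite in a neighbourhood of $0$, $J_1$ has finite moments of all orders. Its mean is
\[
E[J_1]=\frac{d}{d\theta}L_{J_1}(\theta)\Big|_{\theta=0}=\beta\eta^{\beta}(\eta-\theta)^{-(\beta+1)}\Big|_{\theta=0}=\frac{\beta}{\eta}.
\]
The same value can be read off from $\frac{d\psi}{d\theta}(0)$ in the proof of Theorem~\ref{maintheo4}. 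With the hypothesis verified, the preceding Theorem gives the limit $C_o\left(\mu+\alpha\lambda+\frac{\beta\lambda'}{\eta}\right)+C_h x$.

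\emph{Exponential case.} The exponential distribution with parameter $\eta$ is the Gamma case with $\beta=1$, with Laplace transform $\frac{\eta}{\eta-\theta}$ as in Theorem~\ref{maintheo2}. Hence $E[J_1]=1/\eta<\infty$, and substituting into the preceding Theorem gives $C_o\left(\mu+\alpha\lambda+\frac{\lambda'}{\eta}\right)+C_h x$.

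There is no real obstacle. The one point to check is that the parametrisation is consistent: $\eta$ must be the rate, not the scale, so that the mean is $\beta/\eta$ and not $\beta\eta$. This is fixed by the Laplace transform $\left(\frac{\eta}{\eta-\theta}\right)^\beta$ used throughout the paper. Nonnegativity and independence of the $J_k$, which the preceding Theorem also requires, hold automatically for Gamma and exponential jumps.
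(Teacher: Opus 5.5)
Your reduction is the same as the paper's: the corollary is meant to follow from the preceding theorem by substituting $E[J_1]=\beta/\eta$ (resp.\ $1/\eta$). Your computation of these means, including the rate parametrisation fixed by $L_{J_1}(\theta)=(\eta/(\eta-\theta))^\beta$, is correct. The genuine gap is the step ``the preceding Theorem gives the limit'': that theorem is false. So the corollary as stated does not hold, and no substitution of $E[J_1]$ can repair it.

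To see this, note that the paper's own representation gives
\[
\frac1t E[C_{\mathrm{total}}(a,Q,t)]=C_oQ\,\frac{E[R_t]}{t}+\frac{C_h}{t}\int_0^t E[X_s]\,ds .
\]
The bounds $a+(R_t-1)Q\le D_t<a+R_tQ$, together with $D_t\ge \mu t$, give $x-a\le X_t\le x-a+Q$ for all $t\ge a/\mu$. In other words, the inventory is the usual sawtooth between the reorder point $x-a$ and $x-a+Q$. Hence every limit point of the holding term lies in $[C_h(x-a),\,C_h(x-a+Q)]$, which excludes $C_hx$ whenever $Q<a$. So the claimed value, and the claimed independence of $a$ and $Q$, cannot be right.

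The theorem's proof fails where it replaces $\int_0^tE[R_s]\,ds$ by its leading asymptotics $\frac{m}{2Q}t^2$ and then cancels the terms of order $t$. The ``limit'' $C_hQ\cdot t\cdot\frac{m}{2Q}$ still depends on $t$. The discarded remainder is $\frac{C_h}{t}\int_0^tE[QR_s-D_s]\,ds=\frac{C_h}{t}\int_0^t(E[X_s]-x)\,ds$, which is of order one, not $o(1)$.

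In your Gamma/exponential setting this remainder can be computed. For $k\neq0$,
\[
\bigl|E[e^{2\pi i k D_t/Q}]\bigr|=\exp\bigl(t\operatorname{Re}\psi(2\pi i k/Q)\bigr)\to0,
\]
because $|(\eta/(\eta-iu))^\beta|<1$ for $u\neq0$ makes $\operatorname{Re}\psi(iu)<0$. Thus $D_t \bmod Q$ becomes uniform, $E[X_t]\to x-a+Q/2$, and the remainder tends to $C_h(Q/2-a)$ rather than $0$. The correct limit is therefore $C_o(\mu+\alpha\lambda+\beta\lambda'/\eta)+C_h(x-a+Q/2)$, with $\beta=1$ in the exponential case. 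A valid argument has to redo the holding-cost part of the theorem, not only substitute $E[J_1]$.
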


\section*{Acknowledgements}
\noindent The author is deeply grateful to
Professor Seiji Hiraba and Takumu Ooi
for their continuous support, insightful suggestions,
and invaluable guidance.
The author also thanks Professor Aya Ishigaki and Yuriko Ono
for fruitful collaboration and stimulating discussions.

\nocite{*}
\bibliographystyle{plain} 
\bibliography{ref}  

\end{document}